\newtheorem{definition}{Definition}[section]
\newtheorem{theorem}{Theorem}[section]
\def\ps@pprintTitle{%
   \let\@oddhead\@empty
   \let\@evenhead\@empty
   \let\@oddfoot\@empty
   \let\@evenfoot\@oddfoot
}
\begin{document}
	\begin{frontmatter}	
			\title{ A Theory of Rectangularly Dualizable Graphs }

	\author{Vinod Kumar\corref{cor1}\fnref{label2}}
	\ead{vinodchahar04@gmail.com}
	\author{Krishnendra Shekhawat\fnref{label2}}
	\ead{krishnendra.shekhawat@pilani.bits-pilani.ac.in}
	\fntext[label2]{Department of Mathematics, BITS Pilani, Pilani Campus, Rajasthan-333031, India }
	\cortext[cor1]{Corresponding Author}

\begin{abstract} A plane graph is called a {\it rectangular  graph} if  each of its edges  can be oriented  either horizontally or vertically, each of its interior regions is  a four-sided region and all interior regions  can be fitted in a rectangular enclosure. Only planar graphs can be dualized. If the dual of a plane  graph   is a   {\it rectangular  graph}, then  the plane graph   is a  {\it rectangularly dualizable graph}.  

In 1985, Ko{\'z}mi{\'n}ski and Kinnen 	presented a necessary and sufficient condition for the existence of a {\it rectangularly dualizable graph} for a separable connected plane graph. 
In this paper, we present a counter example for which the conditions given by them for   separable connected plane graphs fail and hence,
we derive a  necessary and sufficient condition for a  plane graph to be a {\it rectangularly dualizable graph}.
\end{abstract}
		
\begin{keyword}
plane graph   \sep rectangularly dualizable graph \sep  rectangular floorplan \sep VLSI circuit.
\end{keyword}
		
\end{frontmatter}

\section{Introduction}
\label{sec1}

  The theory of rectangularly dualizable graphs plays an important role in floorplanning, particularly at large scale such as VLSI circuit design. It provides  us information  at early stage to decide whether a given plane graph can be realized by a rectangular floorplan (RFP). There exists a  geometric duality relationship between plane graphs and rectangular floorplans (RFPs) which can be  described  as follows: 

  An RFP is a  partition  of a  rectangle $\mathcal{R}$ into $n$ rectangles $R_1, R_2, \dots, R_n$ such that no four of them meet at a point. A graph $\mathcal{G}_2$ is a dual of a plane graph $\mathcal{G}_1$  if the vertices of $\mathcal{G}_1$ correspond to the regions of $\mathcal{G}_1$ and for every pair of adjacent vertices of $\mathcal{G}_1$, the corresponding regions in $\mathcal{G}_2$ are adjacent.  A plane graph is called a {\it rectangular  graph} if  each of its edges  can be oriented  either horizontally or vertically, each of its interior regions is  a four-sided region  and all interior regions  can be fitted in a rectangular enclosure. Only planar graphs can be dualized. If dual of a plane  graph   is a   {\it rectangular  graph}, then  the plane graph   is a  {\it rectangularly dualizable graphs} (RDG). Thus an RFP can be seen as an embedding of the dual of a  planar graph and it can formally be described as a {\it rectangular dual graph} of an RDG, i.e.,  for  the dual of a RDG to be an RFP, we need to assign  horizontal  and vertical orientations to its edges. For a better clarification, consider a planar graph $\mathcal{G}$ shown in Fig.  \ref{fig:f1}a. We form its extended graph (Fig.  \ref{fig:f1}b.) by inserting cycle of  length 4 at the exterior  of $\mathcal{G}$ and then connecting the vertices of the cycle to the exterior vertices of $\mathcal{G}$. Then it is dualized in Fig.  \ref{fig:f1}c.   After assigning horizontal or vertical orientation to each of its edges, an embedding as shown in  Fig.  \ref{fig:f1}d.  is obtained. In fact, it is an RFP. Thus $\mathcal{G}$ is rectangularly dualized to an RFP.  This transformation is known as the rectangular dualization method which is well-studied in the literature.
 \begin{figure}[H]
 	\centering
 	\includegraphics[width=0.98\linewidth]{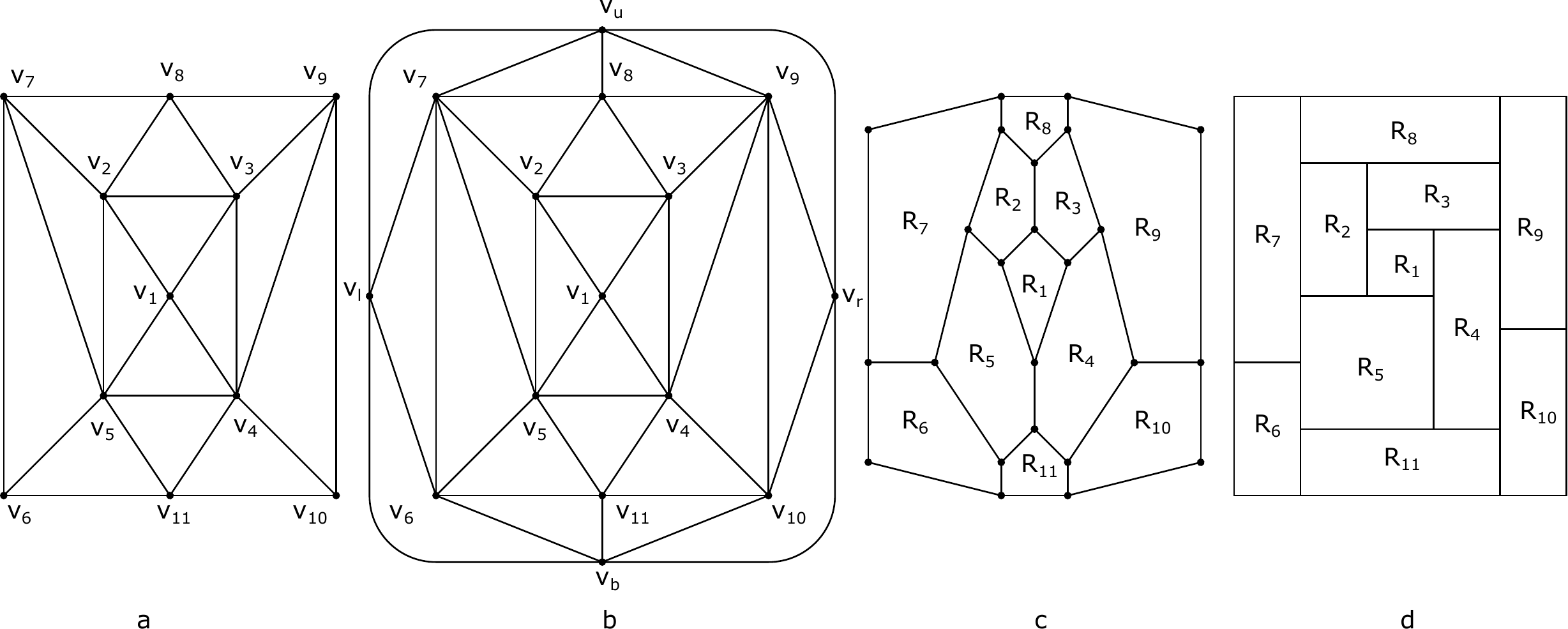}
 		\caption{Rectangular dualization: (a) plane graph, (b) extended plane graph, (c)  rectangular dual graph and (d) rectangular floorplan }
 	\label{fig:f1}
 \end{figure}
 
\subsection{\textbf{RDG application}} 
 A VLSI system structure is described by a graph where vertices correspond to component modules and edges correspond to required interconnections.  For a given  graph structure of a VLSI circuit, floorplanning is concerned with allocating space to component modules and their interconnections \cite{Heller82}. An embedding method given by Heller \cite{Heller82} enforces interconnection by abutment. Modules are designed in such a way that their connectors exactly match  with  their neighbors. Adapting this methodology, interconnections are coped with a {\it clever} design.
\par
 Due to the advancement of VLSI technology, it is extremely large. On the other hand, an RDG can handle atmost $3n-7$ interconnections  \cite{Shekhawat18}, where $n$ is the number of modules.  Consequent to this, its graph  may not necessarily be planar  and hence in modern VLSI system, component modules and interconnections can not be treated as independent entities. In such a situation, not all interconnections can be enforced by abutment. Linking the remaining interconnections with nonadjacent modules utilize additional routing space. For practicality of solution, a graph described by a VLSI circuit can be embedded  in such a way that most of the interconnections can be made  by abutment and the remaining interconnections linking with nonadjacent modules use additional routing space. For example, in Fig. \ref{fig:f2}d,  $R_7$ and  $R_9$,  $R_2$ and  $R_6$ are interconnected through  shaded areas $R_{10}$ and $R_{11}$ respectively. These routing areas are anticipated by introducing crossover vertices\footnote{These vertices are introduced at the intersection of edges if it exists in order to embed a graph as a plane graph.} at a common point of intersection of edges.
   
 The use of an RDG in floorplanning of a VLSI system  can be illustrated by the following example. Consider a graph described by  a VLSI system   as shown in  Fig. \ref{fig:f2}a. Note that input-output connections between  VLSI system and  outside world is represented by arrow heads. Although this graph  is not planar, it is planarized by adding cross over vertices as shown in Fig. \ref{fig:f2}b.  In order to satisfy the necessary adjacency requirements, new edges (red edges) have been added in Fig. \ref{fig:f2}c. After these  modifications, it is possible to construct an  RFP as shown in Fig.  \ref{fig:f2}d where a component rectangle $R_i$  is dualized  to a vertex $v_i$.

\begin{figure}[H]
	\centering
	\includegraphics[width=0.9\linewidth]{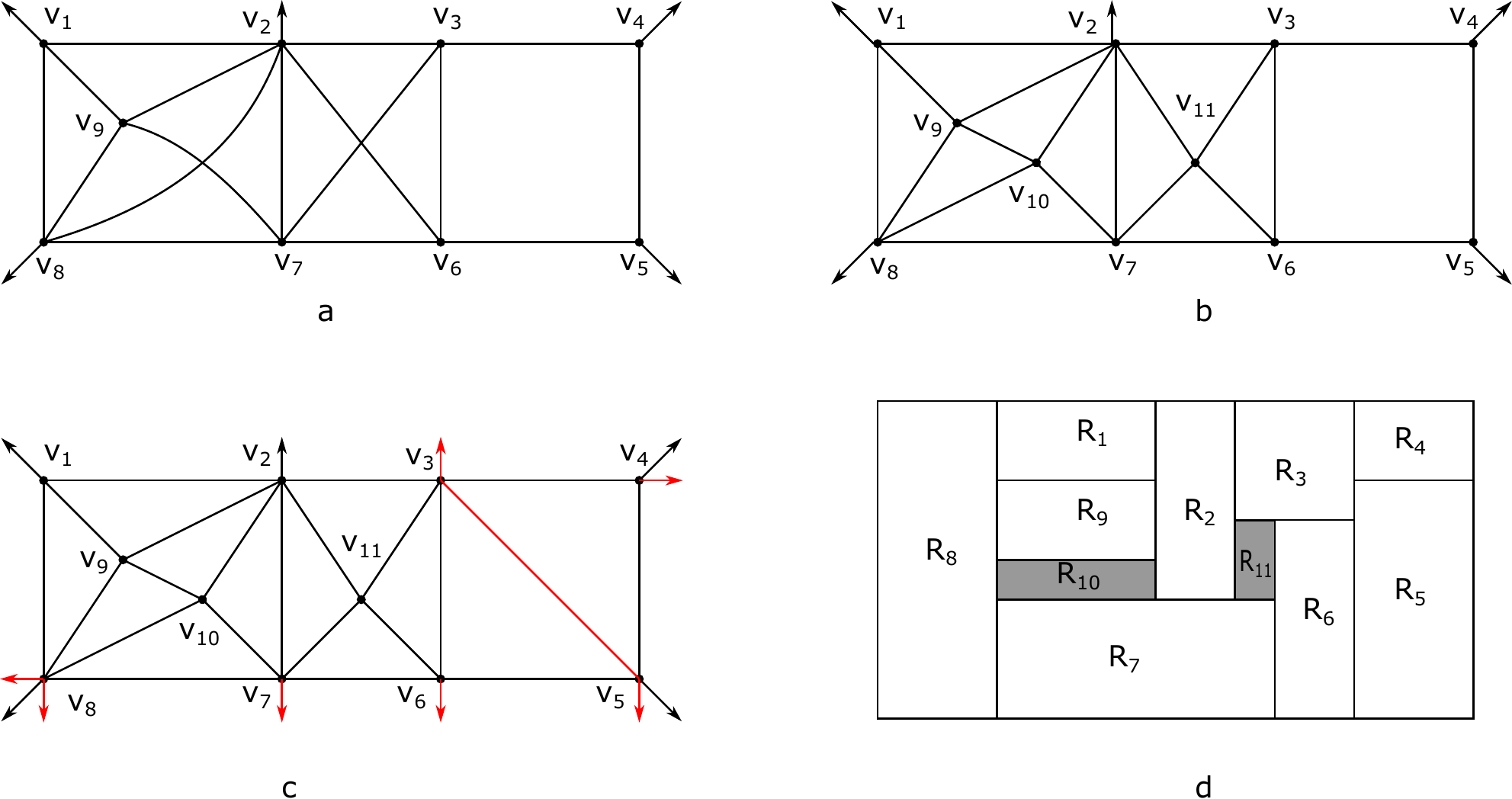}
	\caption{\rm Constructing an RFP corresponding to a plane graph described by a VLSI system.}
	\label{fig:f2}
\end{figure}

\subsection{\textbf{Previous Study}}
Investigations in the literature shows that the rectangular dualization theory \cite{Kozminski85,Bhasker87,Rinsma87,Lai90} of planar graphs is not much emphasized. It is known that every plane graph can be dualized, but not rectangularly dualized \cite{Kozminski85,Bhasker87,Rinsma87,Lai90}.  Kozminski and Kinnen \cite{Kozminski85} derived a necessary and sufficient condition for a plane triangulated graph  to be an RDG and  implemented it in quadratic time \cite{kozminski1984algorithm}.  Later, Bhasker and Sahni \cite{Bhasker87} improved this complexity to linear time implementing the rectangular dualization theory given by  Kozminski and Kinnen \cite{Kozminski85}. Rinsma \cite{Rinsma87} showed through a counter example that it is not always  possible  for a vertex-weighted outer planar graph having 4 vertices of degree 2 to be an RDG. Besides this property, there are infinite outer planar graphs that are not rectangularly dualized. In fact,  an outer planar graph having more than four critical shortcuts can not be rectangularly  dualized. This can be contradicted by our proposed Theorem \ref{thm52} in Section \ref{sec5}.  Since the graph structure of a VLSI system is never outer planar due to its large size, this theory can not be preferable for VLSI circuit's design.  The theory of rectangularly dualizable outer planar graphs plays a limited role in  building architecture also.  Lai and Leinward  \cite{Lai90} showed that solving an RFP problem of a planar graph is equivalent to a matching problem of a bipartite graph derived from the given graph. This theory relies on the assigned regions to vertices of a graph. But  this theory is not easy to  implement, i.e.,  how can we check the assignments of regions to vertices in an arbitrary given plane graph? In fact,  this theory is not  implementable until  a method for checking assignments of regions to vertices in an EPTG (extended planar triangulated graph) is known.   
 \par 
  For practical use to VLSI field,  many constructive algorithms  \cite{Bhasker88,Lai88,Kozminski88,Tang90,He93,Yeap95,Kant97,Dasgupta98,Dasgupta01,Eppstein12} based on the graph dualization theory were developed.   
 \par  
 Counting of RFPs has been remained a great issue in combinatorics \cite{Nakano01,Shen03,Yao03,Ackerman06,Reading12,Dawei14,Yamanaka17} because it produces a large solution space. To find a good condition solution in such large solution space is very hard and time consuming.  Also, in these  approaches,  attention is given to blocks-packing in the minimal rectangular area. The other major concerns such as interconnection wire length  is lagged behind.    
 \par
 With the renewed interest in floorplanning, floorplans are constructed  using rectilinear  modules (concave module)     \cite{Yeap93,He99,Chiang05,Zhang11,Alam13} also. Since  a concave rectilinear module is made up  of more than one rectangle, its design complexity   is higher than a  rectangular module (convex). Constructing a floorplan using concave rectilinear modules may decrease the  quality of the floorplan.

\subsection{\textbf{Gap in the existing work}} Motivated by the following points,  we here find a necessary and sufficient condition for a given plane graph to be an RDG. 
\begin{enumerate}
	\item[i.] Kozminski and Kinnen \cite{Kozminski85} found the following necessary and sufficient  for a separable connected graph to be an RDG.  
	\begin{theorem} \label{thm11}
	{\rm \cite[Theorem 3]{Kozminski85}	Suppose  that $\mathcal{G}$ is a separable connected plane graph with each of its  interior faces triangular.  $\mathcal{G}$ is an RDG if and only if
	\begin{enumerate}
		\item $\mathcal{G}$ has no separating triangle,
		\item block neighborhood graph (BNG) is a path,
		\item each maximal blocks\footnote{A maximal block of a graph $\mathcal{G}$ is a  biconnected subgraph of $\mathcal{G}$ which is not contained in any other block.} corresponding to the endpoints of the BNG contains at most 2 critical corner implying paths, 
		\item no other maximal block contains a critical corner implying path.	
		\end{enumerate}}
	\end{theorem}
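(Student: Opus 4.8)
The plan is to prove the biconditional by treating necessity and sufficiency separately, and in both directions to exploit the geometric duality between an RDG and the RFP realizing its dual. The guiding idea is that, for a separable graph, the cut vertices decompose $\mathcal{G}$ into its maximal blocks, so any RFP for $\mathcal{G}$ must restrict to a consistent rectangular dual on each block; conversely an RFP for the whole graph should be assemblable from rectangular duals of the individual blocks once one knows how the blocks are chained together and how the four corners of the bounding rectangle $\mathcal{R}$ are distributed among them. The first step is therefore to fix the block decomposition and to record, for each block, which of its outer-boundary vertices are forced to occupy a corner of $\mathcal{R}$; the relevant forcing structures are exactly the critical corner implying paths.

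For necessity, suppose $\mathcal{G}$ is an RDG and fix an RFP $\mathcal{F}$ dualizing it. First I would establish condition (1): a separating triangle $xyz$ enclosing an interior vertex would force the rectangles $R_x,R_y,R_z$ to surround the interior region in a pinwheel, which cannot be completed without four rectangles meeting at a point, contradicting the definition of an RFP. Next, condition (2): a cut vertex of $\mathcal{G}$ corresponds to a rectangle of $\mathcal{F}$ shared between the sub-floorplans of distinct blocks, and such a shared rectangle can carry abutting blocks on essentially two opposite sides only, so the blocks must line up linearly, which is precisely the statement that the BNG is a path. Finally, conditions (3)--(4) follow by a counting argument: $\mathcal{R}$ has exactly four corners, each critical corner implying path consumes at least one of them, and the linear arrangement means that only the two terminal blocks retain access to free corners (at most two apiece), while any interior block whose boundary carried a CIP would demand a corner the chain cannot supply.

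For sufficiency the argument is constructive. Assuming (1)--(4), I would order the maximal blocks $B_1,\dots,B_k$ along the path BNG, rectangularly dualize each $B_i$ in isolation using the known characterization for biconnected internally triangulated graphs (absence of a separating triangle together with the correct CIP count guarantees a rectangular dual of the block), and then glue the resulting sub-floorplans in sequence along the rectangles corresponding to the shared cut vertices. Conditions (3) and (4) are exactly what is needed so that the four corners of the global enclosure can be realized: the two end blocks absorb the corners while the interior blocks abut cleanly without demanding any. One then verifies that the glued figure is a genuine partition of a rectangle with no four rectangles meeting at a point, i.e.\ an RFP, so that $\mathcal{G}$ is an RDG.

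I expect the main obstacle to be the gluing step and the corner bookkeeping underlying conditions (2)--(4). Dualizing each block separately is routine, but splicing the sub-floorplans at a cut vertex must be done so that the shared rectangle's geometry is simultaneously compatible with both neighbouring blocks and so that no corner is double-counted or silently lost. Making the notion of ``access to a free corner'' precise, and proving that a CIP distribution satisfying (3)--(4) is both necessary and sufficient for a globally consistent corner assignment, is the delicate heart of the theorem. It is exactly here---in the claim that these local CIP conditions on the blocks faithfully capture global realizability---that I would scrutinize the argument most carefully, since a configuration in which an interior block interacts with the chain in an unanticipated way is the most likely place for the characterization to break down.
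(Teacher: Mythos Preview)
Your proposal attempts to prove a statement that the paper does \emph{not} prove---indeed, the paper's entire point in quoting this theorem is to show that it is \emph{false} as stated. Immediately after stating Theorem~\ref{thm11}, the authors write that ``the proof of this theorem is not rigor, but an outline'' and then exhibit an explicit counterexample (their Fig.~3): a separable connected plane graph with triangular interior faces that satisfies all four of Ko\'zmi\'nski and Kinnen's conditions yet is \emph{not} an RDG. The obstruction is that two cut vertices $v_4$ and $v_6$ are adjacent along an exterior edge; each cut vertex must correspond to a through rectangle in the RFP, but two adjacent through rectangles cannot be accommodated within a rectangular enclosure. The paper's corrected characterization is Theorem~\ref{thm53}, which adds precisely the missing hypothesis that both endpoints of an exterior edge of a block are never simultaneously cut vertices.

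Your instinct in the final paragraph was exactly right: the gluing step is where the argument breaks. When you write that ``splicing the sub-floorplans at a cut vertex must be done so that the shared rectangle's geometry is simultaneously compatible with both neighbouring blocks,'' you have located the gap, but conditions (1)--(4) alone do not guarantee this compatibility. The shared rectangle at a cut vertex must be a through rectangle, and if two consecutive cut vertices along the BNG path happen to be adjacent in $\mathcal{G}$, the two required through rectangles are forced to abut in a way that destroys the rectangular enclosure. Nothing in (1)--(4) rules this out, so the sufficiency direction of your sketch cannot be completed without an additional hypothesis.
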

	The proof of this theorem is not rigor, but an outline. Furthermore, we  present  a counter example showing that it does  not work properly for all separable connected graphs (see Fig. \ref{fig:f3}).
	 \begin{figure}[H]
		\centering
		\includegraphics[width=0.5\linewidth]{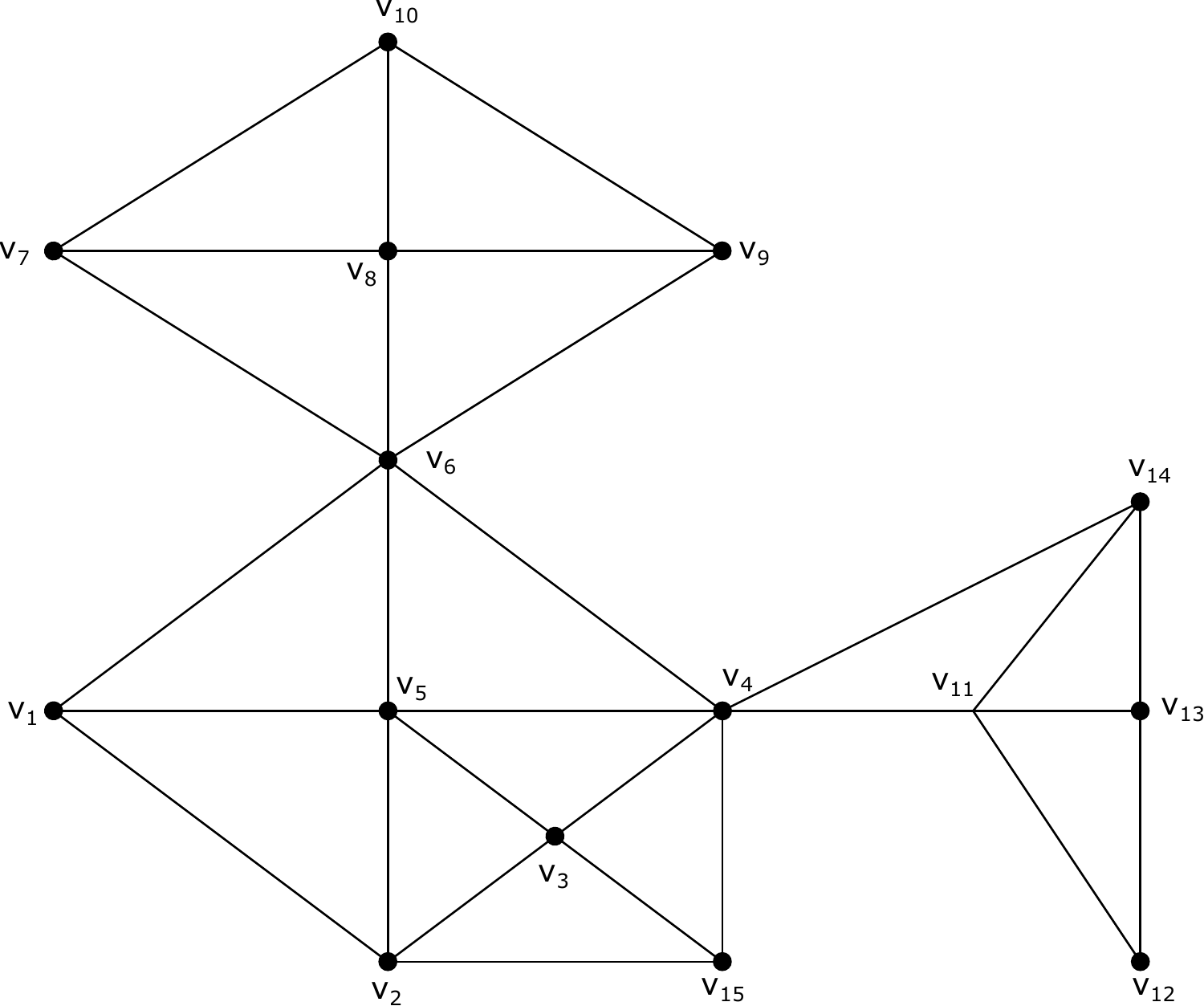}
		\caption{ \rm A counter example contradicting Theorem \ref{thm11}.}
		\label{fig:f3}
	\end{figure}
	
   Although the given graph $\mathcal{G}$ in Fig. \ref{fig:f3} satisfies all the conditions given in Theorem \ref{thm11},  it is not an RDG. Using the existing algorithm \cite{Bhasker88}, one can find an RFP for each of its  blocks. Then, an RFP for $\mathcal{G}$ can be obtained by gluing them  in a rectangular area which is not possible because of the adjacency relation of cut vertices $v_4$ and $v_6$. Corresponding to a cut vertex, there always associate a through rectangle\footnote{A through rectangle shares two sides to the exterior, but they are opposite sides.}\cite{rinsma1988existence} in an RFP for $\mathcal{G}$. But in Fig. \ref{fig:f3}, the cut-vertices  are adjacent. Hence, it is not possible to maintain rectangular enclosure while keeping $R_4$ and $R_6$ as through rectangles.
 \item[ii.] Except Theorem \ref{thm11}, there does not exist  a theorem  to check the existence of an RFP for separable connected planar graphs in the literature.
  
\item[iii.] Lai and Leinward \cite{Lai90}  derived  the following necessary  and sufficient condition for an extended plane triangulated graph (EPTG) to be an RDG:
\begin{theorem} 
{\rm  \cite[Theorem 3]{Lai90} An  EPTG is an RDG if and only if each of its triangular regions can be assigned to one of its corner vertices such that each vertex $v_i$ has exactly $d(v_i)-4$ triangular region assigned to it.}
\end{theorem}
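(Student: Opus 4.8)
The plan is to prove the equivalence by making the bridge between the triangular faces of the EPTG and the \emph{T-junctions} of a rectangular floorplan explicit, and then to read the number $d(v_i)-4$ directly off the corner structure of the rectangles. Throughout I would use the defining property of an RFP that no four rectangles share a point, so that every meeting point is a T-junction at which exactly three rectangles occur: one \emph{bar} rectangle whose side passes straight through the junction, and two \emph{stem} rectangles, each contributing a corner there. Under the rectangular dualization correspondence each interior triangular face of the EPTG corresponds to exactly one such junction, its three corner vertices being precisely the three rectangles that meet at that junction.

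For the forward direction (the EPTG is an RDG $\Rightarrow$ the assignment exists) I would begin from a rectangular dual and assign every triangular region to the vertex that is the \emph{bar} rectangle at the corresponding junction; this is well defined exactly because four rectangles never meet at a point. It then remains to count, for a fixed module vertex $v_i$, how many junctions have $R_i$ as their bar. Traversing $\partial R_i$ once, the $d(v_i)$ neighbours of $v_i$ appear as a cyclic sequence, and the $d(v_i)$ transitions between consecutive neighbours split into two kinds: a transition occurring at one of the four corners of $R_i$, where $R_i$ is a stem, and a transition along the interior of a side of $R_i$, where $R_i$ is the bar. Since a rectangle has exactly four corners, exactly four transitions are of the first kind and the remaining $d(v_i)-4$ are of the second, and the latter are precisely the triangular regions assigned to $v_i$. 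This gives the required count (and incidentally forces $d(v_i)\ge 4$ for every module vertex). As a consistency check, the regions assigned to the module vertices, together with the corner and boundary junctions absorbed by the four frame vertices of the enclosure $\mathcal{R}$, account for every triangular face exactly once.

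For the reverse direction (the assignment exists $\Rightarrow$ the EPTG is an RDG) I would run the correspondence backwards and realise the assignment as a floorplan. The key observation is that the assignment is equivalent to a \emph{regular edge labelling}: declaring, at each triangle, the two edges incident to the assigned bar vertex to be ``flat'' determines, around every vertex $v_i$, four distinguished transitions that partition its incident edges into four contiguous blocks, to be interpreted as its North, East, South and West neighbours. The requirement that each $v_i$ receive exactly $d(v_i)-4$ regions is precisely what forces these four block-boundaries to exist and to sit at the four corners. One then invokes the classical equivalence between such labellings and rectangular duals \cite{Bhasker87,Kant97} to produce the RFP. Equivalently, and closer to the bipartite formulation emphasised in \cite{Lai90}, the whole condition is a degree-constrained ($b$-matching) feasibility problem on the bipartite graph of triangular regions against their corner vertices, with demand $d(v_i)-4$ at $v_i$.

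I expect the reverse direction to be the main obstacle. Local feasibility of the assignment only guarantees the four-block structure at each individual vertex; it does not by itself guarantee \emph{global} geometric realisability. One must still show that the induced horizontal and vertical adjacency constraints are mutually consistent, i.e. that the resulting pair of constraint digraphs is acyclic and that the four blocks glue coherently around the entire EPTG into a partition of a rectangle. This is exactly the step at which the hypotheses packaged into the notion of an EPTG, in particular the absence of separating triangles as in Theorem \ref{thm11}, must be invoked, and where the argument is most delicate; carrying out this global consistency check, rather than the counting in the forward direction, is where the real work lies.
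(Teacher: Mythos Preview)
This theorem is not proved in the present paper at all: it is quoted verbatim from \cite{Lai90} in the literature discussion of Section~\ref{sec1}, and the authors immediately remark that ``this theory is not implementable until a method for checking assignments of regions to vertices in an EPTG is known'' rather than offering an argument for it. There is therefore no in-paper proof to compare your proposal against.

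Taken on its own, your forward direction is the standard and correct one: the interior triangular faces of the EPTG are in bijection with the T-junctions of the rectangular dual, the bar/stem dichotomy is well defined precisely because no four rectangles meet at a point, and the count $d(v_i)-4$ drops out of the fact that a rectangle has exactly four corners. For the reverse direction you correctly isolate the real difficulty, but your sentence ``the assignment is equivalent to a regular edge labelling'' overstates what you actually establish. Knowing the bar vertex of every triangle does give, around each $v_i$, four distinguished stem-triangles and hence four contiguous blocks of incident edges; it does \emph{not} by itself give a colour (horizontal/vertical) and an orientation on each edge, let alone a globally consistent one, which is what a regular edge labelling requires. In Lai and Leinward's original proof this global step is handled through their derived bipartite graph and a matching/assignment argument rather than via regular edge labellings; if you wish to pursue the REL route you cite from \cite{Bhasker87,Kant97}, you still owe the construction of the two-colouring and the acyclicity check, which is precisely the ``global consistency'' step you flag in your final paragraph.
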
	
This theory is not  implementable until  a method for checking assignments of regions to vertices in an EPTG is known. 
\item[iv.] As discussed above Rinsma's work \cite{Rinsma87} does not fully cover the class of all rectangularly dualizable outer planar graphs. 	
	
\end{enumerate}

\subsection{\textbf{ Results }} In this paper, we find  a necessary and sufficient condition for a given plane graph to be an RDG. We show that the unbounded region of an RFP can be   realized by an unbounded rectangle  in the extended Euclidean plane. Equivalently, we can say that  an RFP can be seen as a quadrangulation of the Euclidean plane for which  we find a stereographic projection of   an RDG.

A brief description of our contribution is as follows:
In  Section \ref{sec2},  we discuss existing facts about RDGs. Section \ref{sec3} describes the extended RDG construction process. In  Section \ref{sec4}, we find stenographic projection of the dual of   an RDG   in order to extract some result pertaining to the exterior (unbounded) region of the dual.  In Section \ref{sec5}, we derive a  necessary and sufficient condition for an EPTG to be an RDG. Finally, we conclude our contribution and discuss future scope  in Section \ref{sec6}.
\par
 A list of notations used in this paper can be seen in Table 1.
\begin{table}[H]
	\centering
	\begin{tabular}{|p{2cm}|p{9.6cm}|}
		\hline
	Symbol & Description \\
		\hline
		RFP & rectangular floorplan \\
		\hline
		RDG & rectangularly dualizable graph \\
		\hline
		
		PTG & plane triangulated  graph \\
		\hline
	    EPTG & extended  plane triangulated graph \\
		\hline
		$\mathcal{G}$ & a simple  connected planar triangulated graph \\
		\hline
			$\mathcal{G}^*$ & Extended plane triangulated graph \\
		\hline
		
		$v_i$ &$i^{\text{th}}$ vertex of a graph  \\
		\hline
		$d(v_i)$ & degree of $v_i$\\
		\hline
		$(v_i,v_j)$ & an edge incident to vertices $v_i$ and $v_j$\\
		
		\hline	
		$R_i$ & $i^{\text{th}}$  rectangle (region) of an RFP (RDG) corresponding to  $v_i$ \\
		\hline
		
	\end{tabular}\\
\caption{\rm List of Notations}
\end{table}

\section{Preliminaries} 
\label{sec2}
 In this section, we  survey several facts about RFPs that would be helpful to prove our results.
 \par 
   A graph is called planar if it   can be  drawn in the Euclidean plane without crossing its edges except endpoints.  A plane graph is a planar graph with a fixed planar drawing. It splits the Euclidean plane into connected regions called faces; the unbounded region is the exterior face (the outermost face) and all other faces are interior faces. The vertices lying on the exterior face are exterior vertices and all other vertices are interior vertices. A graph is said to be  $k-$connected if it has at least $k$ vertices and the removal of fever than $k$ vertices does not disconnect the graph. If a connected graph has a cut vertex, then it is called a separable  graph, otherwise it is called a nonseparable  graph. Since floorplans are concerned with connectivity, we only consider nonseparable (biconnected) and separable connected graphs in this paper. A plane graph is called plane triangulated  graph (PTG) if it has triangular faces. A TPG may or may not have exterior face triangular. In this paper, an PTG represents a plane graph  with interior triangular faces. 
   \begin{definition} \label{def21}
   {\rm A  graph is said to be {\it rectangular graph } if each of its edges can be oriented horizontally or vertically such that it encloses a rectangular area. If the dual graph of  a planar graph is a rectangular graph, then the graph  is said to be a {\it rectangularly dualizable graphs} (RDG). In other words, a planar graph is rectangular dualizable (RDG) if its dual can be realized as a rectangular floorplan (RFP). An RFP is a partition  of a  rectangle $\mathcal{R}$ into $n$ rectangles $R_1, R_2,\dots, R_n$ provided that no four of them meet at a point.}
   \end{definition}

\begin{definition} \label{def23}
{ \rm \cite{Kozminski85} The block neighborhood graph (BNG) of a plane graph $\mathcal{G}$ is a graph where vertices are represented by biconnected components of $\mathcal{G}$ such that  there is an edge between two vertices if and only if the two biconnected components they represent, have a vertex in common.}
\end{definition} 

\begin{definition} \label{def24}
{\rm \cite{Kozminski85} A shortcut in a plane block $\mathcal{G}$ is an edge that is incident to two vertices on the outermost cycle $C$  of $\mathcal{G}$ and is not a part of $C$. A corner implying path (CIP) in   $\mathcal{G}$ is a  $v_1-v_k$ path on the outermost cycle of $\mathcal{G}$ such that it does not contain any vertices of a shortcut other than $v_1$ and $v_k$ and the shortcut $(v_1,v_k)$ is called a critical shortcut.}

\end{definition} 
 For a better understanding to Definition \ref{def24},  consider a  graph shown in Fig. \ref{fig:f4}. Edges $(v_1,v_3)$, $(v_4,v_9)$ and $(v_6,v_8)$ are  shortcuts. Paths $v_1v_2v_3v_4$ and $v_6v_7v_8$  are CIPs while path $v_9v_1v_2v_3v_4$  is  is not a CIP  since  it contains the endpoints of other shortcut $(v_1,v_3)$ and hence $(v_9,v_4)$ is not a critical shortcut. Both shortcuts $(v_1,v_3)$ and $(v_6,v_8)$ are of length 2.

\begin{figure}[H]
	\centering
	\includegraphics[width=0.7\linewidth]{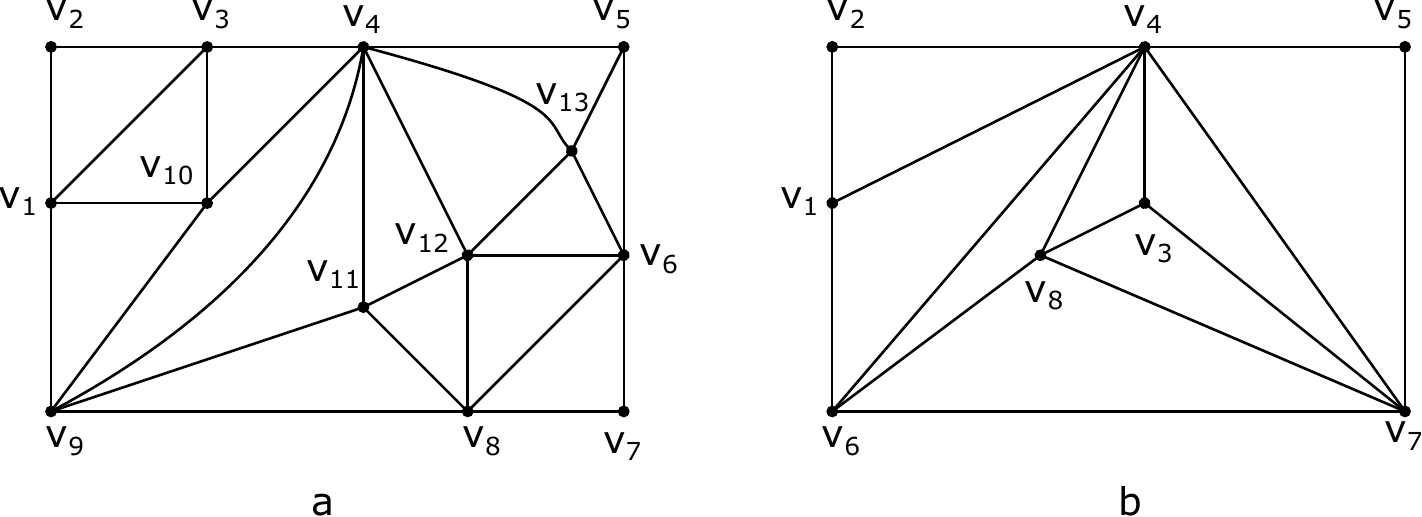}
	\caption{\rm (a) Presence of  CIPs $v_1v_2v_3$ and (b) a separating triangle $v_4v_6v_7v_4$.}
	\label{fig:f4}
\end{figure}

	{\rm  A component rectangle in an RFP is called a  corner rectangle \cite{rinsma1988existence} if its two adjacent sides are adjacent to the exterior  while  a through rectangle shares its two opposite sides to the exterior. A component rectangle in an RFP is called an  end rectangle  if its three sides are adjacent to the exterior.}	
\begin{definition} \label{def26}
{\rm A {\it separating cycle} is a cycle  in a plane graph $\mathcal{G}$ that encloses vertices inside as well as outside. If a separating cycle is of  length 3, it is called a {\it separating triangle} or a complex triangle.  We say a   separating triangle in a plane graph  is {\it critical separating triangle} if it does not  contain any other separating triangle in its interior.}
\end{definition} 
For instance,  in Fig. \ref{fig:f4}b, the cycle $v_4v_6v_7v_4$ is a separating triangle while  the cycle $v_4v_8v_7v_4$ is a critical separating triangle.

\begin{theorem} \label{thm22}
	{\rm \cite[Theorem 3]{Kozminski85} A nonseperable plane graph $\mathcal{G}$ with triangular interior faces except exterior one is an RDG if and only if  it has atmost 4 CIPs and has no separating triangle.}
\end{theorem}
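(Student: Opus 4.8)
The plan is to prove the two implications separately, disposing of necessity first and reserving the constructive sufficiency for the bulk of the work. Suppose $\mathcal{G}$ is an RDG, so that its dual is realized by an RFP partitioning a rectangle $\mathcal{R}$ into rectangles $R_1,\dots,R_n$. To rule out separating triangles, observe that a separating triangle $v_iv_jv_k$ in $\mathcal{G}$ would force the rectangles $R_i,R_j,R_k$ to be pairwise adjacent while jointly enclosing the rectangles dual to the vertices trapped strictly inside the triangle. I would establish the supporting geometric lemma that in any RFP three pairwise-adjacent rectangles cannot surround a further rectangle (each free side of such a configuration must reach the exterior of the three), which contradicts the presence of interior vertices of the separating triangle. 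Hence an RDG has no separating triangle.

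For the CIP bound I would interpret a critical shortcut $(v_1,v_k)$ and its CIP $v_1\cdots v_k$ inside the RFP: the boundary of the union of the rectangles lying along the CIP must contain a corner of $\mathcal{R}$, because the chord $(v_1,v_k)$ forces a convex turn that cannot be absorbed without a corner of the enclosing rectangle. Since distinct CIPs occupy internally disjoint arcs of the outer cycle and $\mathcal{R}$ has exactly four corners, at most four CIPs can occur. Making precise that each CIP consumes a \emph{distinct} corner, and that two CIPs can never be served by the same corner, is the delicate point of this direction.

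Sufficiency is the constructive heart of the argument. Assume $\mathcal{G}$ has no separating triangle and at most four CIPs. I would first fix four corner vertices on the outer cycle $C$, placing one strictly inside each CIP and distributing any remaining corners arbitrarily, so that $C$ splits into four consecutive arcs assigned to four new outer vertices $N,E,S,W$. Forming the extended graph $\mathcal{G}^*$ by adjoining $N,E,S,W$, the outer $4$-cycle $NESW$, and all edges from each of $N,E,S,W$ to the vertices of its arc, I turn the outer face into a quadrilateral while keeping every interior face triangular. The crucial verification is that $\mathcal{G}^*$ contains no separating triangle: the hypothesis handles triangles already present inside $\mathcal{G}$, while placing a corner strictly inside each CIP prevents a critical shortcut from closing up with a newly added outer vertex to form a separating triangle. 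This interplay is exactly where the CIP condition is used, and I expect it to be the main obstacle.

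Finally, with $\mathcal{G}^*$ a triangulation whose outer face is the quadrilateral $NESW$ and which is free of separating triangles, I would invoke the existence theory for rectangular duals: construct a regular edge labelling of $\mathcal{G}^*$ (equivalently a canonical ordering, or a pair of transverse orientations) and read off from it the horizontal and vertical coordinates of the rectangle sides, producing an RFP whose adjacencies are precisely those of $\mathcal{G}$. Verifying that such a labelling exists exactly for separating-triangle-free triangulations with a quadrilateral outer face, and that the coordinates it yields assemble into a genuine partition of $\mathcal{R}$ in which no four rectangles meet at a point, is the technical core I expect to dominate the remainder of the proof.
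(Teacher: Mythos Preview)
The paper does not supply its own proof of this theorem: Theorem~\ref{thm22} appears in the preliminaries as a cited result from Ko\'zmi\'nski and Kinnen, with no argument given. The paper's own contribution is the reformulation in Theorem~\ref{thm52} (4-connectedness together with at most four critical shortcuts), which is derived from Theorem~\ref{thm51} on EPTGs. So there is no proof here to compare yours against directly.

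That said, your outline is the standard route in the literature and is sound. It differs from the paper's machinery in two visible ways. First, your extension adjoins four outer vertices $N,E,S,W$ and a $4$-cycle, whereas the paper's EPTG adjoins a single vertex $v_\infty$ with parallel ``construction edges'' to the chosen corner vertices; the two models are equivalent (contract the $4$-cycle to a point), but the $NESW$ picture lets you stay in the category of simple graphs and feed directly into the regular-edge-labelling theory. Second, for sufficiency the paper argues by induction on the vertex count of the EPTG (Theorem~\ref{thm51}), splitting along a short separating cycle and merging two smaller RFPs, while you appeal to the REL/canonical-ordering construction. Your route is cleaner and better known; the paper's inductive argument is more self-contained but carries more bookkeeping.

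Two points in your plan deserve care. In the necessity direction, the claim that each CIP forces a \emph{distinct} corner of $\mathcal{R}$ needs the observation that the rectangles along a CIP are separated from all other boundary rectangles by the single rectangle dual to the shortcut endpoint pair, so the CIP arc must occupy a full side-to-side stretch of $\partial\mathcal{R}$ containing a corner; since CIPs are edge-disjoint arcs of the outer cycle, their corners are distinct. In the sufficiency direction, the verification that $\mathcal{G}^*$ has no separating triangle must also dispose of triangles through \emph{two} new outer vertices (say $N$ and $E$): this uses that $N$ and $E$ share exactly one neighbour in $\mathcal{G}$, namely the chosen corner vertex, so any such triangle is a face. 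With those two checks made explicit, your argument goes through.
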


\begin{theorem} \label{thm23}
	{\rm \cite{west1996introduction} A  graph $\mathcal{G}$ is 4-connected if and only if  there exist atleast 4 vertex-disjoint path between any two vertices of $\mathcal{G}$.}
\end{theorem}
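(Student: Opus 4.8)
The statement is the $k=4$ case of the classical connectivity characterisation (the global form of Menger's theorem, sometimes attributed to Whitney), so the plan is to derive it from the local, two-vertex version of Menger's theorem. Recall that the local version asserts: for any two non-adjacent vertices $u,v$ of a graph, the maximum number of internally vertex-disjoint $u$--$v$ paths equals the minimum number of vertices whose deletion separates $u$ from $v$. Throughout, ``vertex-disjoint paths between $u$ and $v$'' is read as \emph{internally} vertex-disjoint, since any two $u$--$v$ paths necessarily share the two endpoints. I would treat the two implications separately, and within the forward implication split into the non-adjacent and the adjacent endpoint cases.

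For the sufficiency direction ($\Leftarrow$), I would assume that every pair of vertices is joined by at least $4$ internally disjoint paths and show $\mathcal{G}$ is $4$-connected. Let $S$ be any vertex set with $|S|\le 3$, and pick distinct $u,v\notin S$. Among the four internally disjoint $u$--$v$ paths the interiors are pairwise disjoint, so each vertex of $S$ lies in the interior of at most one path; since $|S|\le 3<4$, at least one of the four paths avoids $S$ entirely and survives in $\mathcal{G}-S$, keeping $u$ and $v$ connected. Hence no set of fewer than $4$ vertices disconnects $\mathcal{G}$. The minor bookkeeping point --- that $\mathcal{G}$ has the required number of vertices --- follows because three of the four disjoint paths must contain an interior vertex (a simple graph has only one $u$--$v$ edge), forcing at least five vertices in all.

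For the necessity direction ($\Rightarrow$), assume $\mathcal{G}$ is $4$-connected and fix distinct $u,v$. If $u$ and $v$ are non-adjacent, then every $u$--$v$ separator has at least $4$ vertices (otherwise its removal would disconnect $\mathcal{G}$), so local Menger immediately yields at least $4$ internally disjoint $u$--$v$ paths. If $u$ and $v$ are adjacent, I would pass to $\mathcal{G}'=\mathcal{G}-(u,v)$, in which $u$ and $v$ are non-adjacent, and first establish that deleting a single edge lowers connectivity by at most one, so $\mathcal{G}'$ is $3$-connected. This sub-claim is proved by contraposition: if a set $T$ with $|T|\le 2$ separated $\mathcal{G}'$, then $T\cup\{u\}$ would separate $\mathcal{G}$ (restoring the edge $(u,v)$ can only re-link the two sides through $u$), contradicting $4$-connectivity. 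Applying local Menger in $\mathcal{G}'$ then gives $3$ internally disjoint $u$--$v$ paths there, and adjoining the edge $(u,v)$ as a fourth path --- internally disjoint from the rest since it has no interior vertices --- completes the count.

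The routine part is the sufficiency direction together with the non-adjacent subcase, which reduce to a direct pigeonhole argument and a direct application of Menger. The main obstacle is the adjacent-endpoint subcase: the reduction to $\mathcal{G}'$ only works once the edge-deletion connectivity bound $\kappa(\mathcal{G}-e)\ge\kappa(\mathcal{G})-1$ is in hand, and one must check that the separator $T\cup\{u\}$ genuinely disconnects $\mathcal{G}$ (in particular that the resulting pieces are nontrivial), which is precisely where the $4$-connectivity and the attendant lower bound on the number of vertices are used.
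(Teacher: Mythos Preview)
Your argument is correct and is the standard textbook derivation of the global $k=4$ case from the local form of Menger's theorem; the only delicate spot---the adjacent-endpoint subcase and the possibility that $T\cup\{u\}$ fails to separate because $u$'s side is trivial---is exactly the one you flag, and it is indeed resolved by the minimum-degree bound $\delta(\mathcal G)\ge\kappa(\mathcal G)=4$.

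As for comparison with the paper: there is nothing to compare. The paper does not supply a proof of this theorem; it merely quotes it (with a citation to West's \emph{Introduction to Graph Theory}) as background used later in the proof of Theorem~\ref{thm51}. So your write-up is not an alternative to the paper's argument but rather a self-contained justification of a result the authors take for granted.
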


\section{Extended RDG Construction} \label{sec3}

 In a graph  described by a VLSI system, vertices and edges correspond to component modules and required interconnections respectively. Communication with units outside the given system are modeled by edges having one end incident to a vertex  at the infinity (denoted by $v_{\infty}$, see Fig. \ref{fig:f5}). 
 The vertex $v_\infty$ of an RDG corresponds to  the unbounded region of its rectangular dual (RFP). 
\par 
Only planar graphs can be dualized. Whenever the graph structure is not planar, it can be made planar by adding crossover vertices until the resultant graph is planar. Such vertices are inserted at crossings of edges  in order to split the edges of a nonplanar graph. In general,  maximum interconnections by abutment and minimum through channels is used as an objective function.  Without loss of generality, we consider simple  connected planar graphs in this paper.
\par
 In a floorplan, meeting $k$-component rectangles at  a point is  called $k$-joints. Since an RFP has three joints or four joints only, its dual  has triangular or quadrangle regions only.  Abiding by common design practice,  we consider RFPs with three joints only. 
 In fact, a quadrangle region can be partitioned into two triangular regions. In such cases, some extra adjacency requests allow unrelated components in the RDG to connect, but these connections are not used  for interconnection.   
\par
Furthermore, a rectangular graph needs to be fitted in a rectangular enclosure while connecting to the outside world. Vertices that correspond to  regions next to the enclosure are called enclosure vertices \cite{Lai88} and those vertices correspond to corner regions are called corner enclosure vertices. In figure \ref{fig:f5}, vertices $v_7,v_6,v_5,v_4,v_3,v_2,v_1$ are enclosure  vertices and  $v_7,v_5,v_3,v_1$ are corner enclosure vertices. Since the enclosure has 4 sides, out of these enclosure vertices,   the  enclosure corner vertices correspond to corner rectangles or end rectangles of an RFP where a corner rectangle shares its two sides to the unbounded (exterior) region and end rectangle shares three sides to the exterior. Therefore, we need to consider atmost 4 extra edges between the selected enclosure corner vertices  and  $v_\infty$. These atmost 4 extra edges are known as {\it construction edges} \cite{Lai90}. A PTG where enclosure vertices are connected to $v_\infty$ together with 4 additional construction edges is called  an EPTG (extended planar triangulated graph). An EPTG is depicted  in Fig. \ref{fig:f5} by red edges.

It is interesting to note that the regions including unbounded region are triangulated in EPTG so that every region including unbounded region of the dual of RDG is quadrangle. This permits the enclosure to be rectangular. A detailed description of unbounded quadrangle region of the dual  can be seen in Section \ref{sec4}.  Since there is one to one corresponding between the edges of  a plane graph and its dual,  an enclosure corner vertex has parallel edges to $v_\infty$. 
In this paper, we consider a simple connected plane triangulated graph, i.e., there is no loops or parallel edges. However, some minor changes ( parallel edges between enclosure corner vertices and $v_\infty$ only) in the EPTG is done in order to choose  four construction edges. 

\begin{figure}[H]
	\centering
	\includegraphics[width=0.5\linewidth]{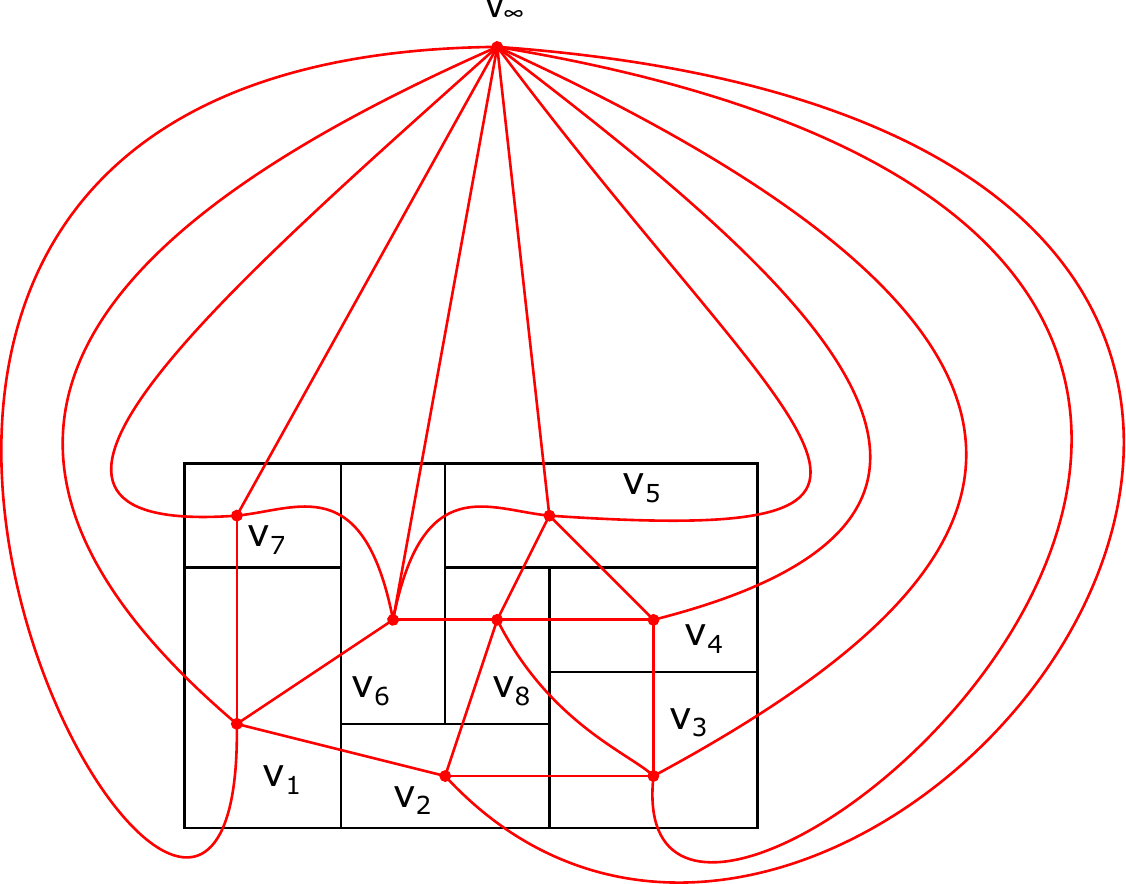}
	\caption{\rm  Construction of an extended RDG (red edges) and   its corresponding  RDG   (dark edges).}
	\label{fig:f5}
\end{figure}

\section{RDG Stereographic Projection}
\label{sec4}
In this section,  we describe  stereographic projection of a rectangular graph.
\par
A small difference between a rectangular dual graph and an RFP is that  RFP is  an edge-oriented graph such that each of its edges is either horizontally or vertically aligned together with rectangular enclosure whereas the edges of a rectangular dual graph may not  be oriented either horizontally or vertically, but they can always be  oriented  horizontally or vertically. Thus, for transforming a rectangular dual graph into be an RFP, we need to know the orientations of the edges. Once it is known, a rectangular dual graph can be converted into an RFP by orienting edges aligned along horizontal or vertical axis of the Euclidean plane. Thus, an RFP can be seen  an embedding of a rectangular dual graph. Further it is interesting to note that  an RFP is always a rectangular dual graph, but converse it not true. 
\par 
Let $\mathcal{D}$ be the rectangular dual graph  of an RDG $\mathcal{G}$. Note that a connected plane graph is a single piece made up of continuous curves (called edges) joining  their ends to pairs of the specified points (called vertices) in the Euclidean plane.  Consider a  sphere $S$  centered at $(0,0,1/2)$ having radius   $1/2$, and a fixed plane embedding $\mathcal{D}^*$ of $\mathcal{D}$ in the Euclidean plane passing through $z=0$ ($xy$-plane). Let $(0,0,1)$ be the north pole $N$ and $p$ be a point of an edge of $\mathcal{D}^*$. Draw a line segment joining the points $N$ and $p$. Let $t$ be a point where it intersects the surface of $S$. Thus we see that the point $p$ is mapped to the point $t$. In this way, the image of each of its points is a curved line on the surface of $S$ and hence each  edge of  $\mathcal{D}^*$ is mapped to a curved line on the surface of $S$. This results an embedding   of $\mathcal{D}^*$ on the surface of a sphere. 

Now, it is important to identify why the edge of $\mathcal{D}^*$ is mapped to the edges on the surface of $S$? In fact,   a connected graph is carried to a connected graph by  a continuous map. Thus being the mapping continuous, the image of $\mathcal{D}^*$  is again a plane graph on the surface of $S$ with its exterior bounded. Note that the unbounded region is now mapped into a bounded region on $S$ passing through $N$. This process is known as stereographic projection and sphere is known as Riemann sphere. But $\mathcal{D}^*$ is a rectangular dual graph. Its exterior is a four sided rectangular enclosure. This results  the unbounded region of  $\mathcal{D}^*$  corresponds to a four sided bounded region of the corresponding plane graph embedded on the surface of $S$. Consequently, when we assign horizontal or vertical orientations to the edges of  $\mathcal{D}^*$  to transform into an RFP, the unbounded region of $\mathcal{D}^*$ corresponds to an unbounded rectangle (region) $R_\infty$ passing through $\infty$.  Thus we see that the exterior of an RFP is a  rectangle  $R_\infty$ passing through $\infty$. Note that $R_\infty$ is not a part of an RFP, but is a rectangle that shares its two adjacent sides to each of its enclosure corner rectangles. Recall that a rectangle is a four-sided region with 4 right  interior angles formed by its sides. Although in case of $R_\infty$, these interior angles  can be realized to be $90^{\circ}$ by looking at it from  a point at $\infty$, otherwise we realize every interior angle to be $270^{\circ}$. The role of the point at  $\infty$ is played by $N$ and hence an alternative way is to realize right angle between two sides of the four-sided region passing through $N$ in the stereographic projection of the rectangular dual graph is the angle between the intersection of their tangents to the sides of this region.   This discussion realizes us  that an RFP is  quadrangulation of the Euclidean plane.

\section{RDG Existence Theory}
\label{sec5}
In this section, we describe the theory of RDGs. 

\begin{theorem}\label{thm51}
	{\rm A necessary  and sufficient condition for an EPTG $\mathcal{G}^*$ to be an RDG is that it is 4-connected and has atmost 4 critical separating triangles passing through $v_\infty$.}	
\end{theorem}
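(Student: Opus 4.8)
The plan is to prove Theorem~\ref{thm51} by reducing it to the nonseparable characterization of Theorem~\ref{thm22}, using the sphere picture from Section~\ref{sec4} to account for the unbounded region. Via stereographic projection, an RFP together with its outer rectangle $R_\infty$ is a quadrangulation of the sphere, so its dual EPTG $\mathcal{G}^*$ is a triangulation of the sphere in which $v_\infty$ is the vertex dual to $R_\infty$. Working on the sphere lets me treat $\mathcal{G}^*$ as a maximal planar graph and apply standard facts about triangulations, while the distinguished vertex $v_\infty$ carries exactly the boundary and corner information that Theorem~\ref{thm22} encodes through CIPs. A useful by-product of the $v_\infty$-augmentation is that it restores connectivity across cut vertices of the underlying graph, so I expect the biconnected criterion to apply uniformly even when the original plane graph is separable.

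The technical heart is a dictionary between the obstructions in the underlying PTG $\mathcal{G}$ and the separating triangles of $\mathcal{G}^*$. First I would show that a critical shortcut $(v_i,v_j)$ of $\mathcal{G}$ gives rise in $\mathcal{G}^*$ to a triangle $v_\infty v_i v_j$ (the shortcut edge together with the two enclosure edges to $v_\infty$) enclosing precisely the interior vertices of the corresponding CIP, and conversely that every critical separating triangle through $v_\infty$ arises this way. This makes the CIPs of $\mathcal{G}$ correspond bijectively to the critical separating triangles of $\mathcal{G}^*$ passing through $v_\infty$, turning the bound ``at most $4$ CIPs'' of Theorem~\ref{thm22} into ``at most $4$ critical separating triangles through $v_\infty$.'' Next I would show that the separating triangles of $\mathcal{G}^*$ avoiding $v_\infty$ are exactly the interior separating triangles of $\mathcal{G}$, so that the hypothesis ``no separating triangle'' of Theorem~\ref{thm22} becomes the requirement that every $3$-cut of $\mathcal{G}^*$ meets $v_\infty$. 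Since a triangulation with at least five vertices is $3$-connected and its only minimal vertex cuts of size three are separating triangles, this ``all $3$-cuts pass through $v_\infty$'' condition is what the statement records as $4$-connectivity, and Theorem~\ref{thm23} can be used to phrase it through vertex-disjoint paths when constructing the dual.

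With the dictionary in hand the two implications are short. For necessity, if $\mathcal{G}^*$ is an RDG then its dual is an RFP; the four corners of the rectangular enclosure pull back under the sphere map to at most four critical separating triangles at $v_\infty$, while any separating triangle of $\mathcal{G}^*$ off $v_\infty$ would be an interior separating triangle of $\mathcal{G}$ and hence forbidden by Theorem~\ref{thm22}, yielding $4$-connectivity. For sufficiency, the at most four critical separating triangles through $v_\infty$ single out the (at most four) corner enclosure vertices to serve as the corners of the rectangle, and $4$-connectivity guarantees that $\mathcal{G}$ has no separating triangle and at most four CIPs; Theorem~\ref{thm22} then produces an RFP, which I would glue to $R_\infty$ along the enclosure to certify that $\mathcal{G}^*$ is an RDG.

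The step I expect to be the main obstacle is reconciling the $4$-connectivity hypothesis with the simultaneous presence of up to four separating triangles through $v_\infty$: strictly speaking each such triangle is a $3$-cut, so one must interpret $4$-connectivity as ``no separating triangle other than the corner triangles at $v_\infty$'' and prove this is the correct reading. Establishing the corner-to-triangle bijection rigorously is equally delicate, since the degenerate configurations --- end rectangles that occupy a full side of the enclosure, corners shared between incident enclosure rectangles, and shortcuts of length two --- must each be shown to contribute the right count, so that a genuine rectangle yields exactly, and at most, four critical separating triangles through $v_\infty$. This is precisely where the analysis of the unbounded region $R_\infty$ developed in Section~\ref{sec4} has to be invoked in full.
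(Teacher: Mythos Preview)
Your approach differs substantially from the paper's. The paper proves Theorem~\ref{thm51} directly by induction on $|V(\mathcal{G}^*)|$: for necessity it argues from the RFP that every vertex of $\mathcal{G}^*$ has degree at least four and that each enclosure corner region contributes at most one critical separating triangle through $v_\infty$; for sufficiency it either deletes an enclosure corner vertex carrying three edges to $v_\infty$ (the end-rectangle case, Fig.~\ref{fig:f7}) and invokes the inductive hypothesis, or else locates a shortest separating cycle through $v_\infty$, splits $\mathcal{G}^*$ into two smaller EPTGs $\mathcal{H}_u$ and $\mathcal{H}_b$, and stacks their rectangular duals (Figs.~\ref{fig:f8}--\ref{fig:f9}). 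Your plan instead reduces everything to Theorem~\ref{thm22} via the dictionary that matches CIPs of $\mathcal{G}=\mathcal{G}^*-v_\infty$ with critical separating triangles through $v_\infty$, and interior separating triangles of $\mathcal{G}$ with separating triangles of $\mathcal{G}^*$ avoiding $v_\infty$. The paper's induction is self-contained and constructive --- it does not appeal to \cite{Kozminski85} and effectively yields an algorithm --- and it is what the paper then leverages to derive Theorems~\ref{thm52} and~\ref{thm53}; your reduction is shorter and makes transparent that Theorem~\ref{thm51} is the sphere reformulation of Theorem~\ref{thm22} with $v_\infty$ playing the role of the outer face. The obstacle you isolate is real and is the one place your plan needs care beyond what you wrote: once the construction edges are present, each critical separating triangle through $v_\infty$ already furnishes a $3$-cut, so ``$4$-connected'' must indeed be read as you propose; and for sufficiency you must additionally check that every CIP of $\mathcal{G}$ contains one of the four corner vertices encoded by the parallel edges of $\mathcal{G}^*$, since Theorem~\ref{thm22} only guarantees \emph{some} rectangular dual of $\mathcal{G}$, not one realizing those prescribed corners.
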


\begin{proof}
\textbf{Necessary Condition.} Assume that  $\mathcal{G}^*$ is an RDG. Then it has a rectangular dual graph $\mathcal{D}$. Let $v_i$ be a vertex of $\mathcal{G}^*$ dual to some interior region $R_i$ of $\mathcal{D}$. Since every region of  $\mathcal{D}$ is four-sided,  atleast 4  regions are required to fully enclose an interior region of  $\mathcal{D}$. This implies that  $R_i$ is surrounded by atleast 4 regions of $\mathcal{D}$ and hence $v_i$ is adjacent to atleast 4  vertices of $\mathcal{G}^*$, i.e.,  $d(v_i) \geq 4$.  Let $v_e$ be a vertex of $\mathcal{G}^*$ dual to an enclosure (exterior) region $R_e$ of $\mathcal{D}$. There arise  two  possibilities: 
\begin{itemize}
    \item $R_e$  surrounds exactly its two sides with $R_\infty$ if it is an enclosure corner region,
    \item $R_e$ surrounds exactly its one side with $R_\infty$ if it is not an enclosure corner region.
\end{itemize}
 In the first case, $R_\infty$ surrounds the two sides of $R_e$. There are two edges between $v_\infty$ and $v_e$ where $v_\infty$ corresponds to $R_\infty$. The remaining two sides of $R_e$ are surrounded by atleast two interior regions other than  $R_\infty$.  This implies that $d(v_e) \geq 4$. In the second case, only one side of $R_e$ is surrounded by $R_\infty$ and the remaining sides are surrounded by atleast three interior regions.  This implies that $d(v_e) \geq 4$.  Since $v_e$ and $v_i$  are  arbitrary vertices of $\mathcal{G}^*$, $\mathcal{G}^*$ is 4-connected. This proves the first condition.
\par
As discussed in Section \ref{sec4}, $\mathcal{R}_\infty$ surrounds exactly its two adjacent sides to each of the  enclosure corner regions of $\mathcal{D}$ and  exactly one side to the remaining exterior regions of $\mathcal{D}$. Let $v_c$ be a vertex of $\mathcal{G}^*$ dual to an enclosure corner region of $\mathcal{D}$. We have already shown that  $\mathcal{G}^*$ is 4-connected, i.e.,  $d(v_c)\geq 4$, $\forall v_c  \in \mathcal{G}^*$. If $d(v_c)= 4$, then two adjacent sides of $R_c$ are surrounded by $R_\infty$ whereas the remaining two sides of $R_c$ are surrounded by  two regions  $R_a$ and $R_b$. Clearly, $R_a$ and $R_b$ are the enclosure regions. Since $\mathcal{G}^*$ is an EPTG, every region of  $\mathcal{G}^*$  is triangular. This implies that $R_a$ and $R_b$ are adjacent. Consequently, there is a   separating triangle passing through $v_\infty$ and vertices that are dual to $R_a$ and $R_b$. Clearly, it encloses exactly one vertex $v_c$. This implies that there is no  separating triangle inside this separating triangle and hence it is a critical separating triangle. This situation is depicted  in  Fig. \ref{fig:f6}a. If $d(v_c)> 4$, there are atleast three interior regions that surround  $R_c$. Vertices that are dual to these  interior regions together with $v_\infty$ is a cycle of  length atleast 4.  Only possibility for the existence of  a critical separating triangle passing through $v_\infty$ and enclosing  $v_c$ is depicted in Fig. \ref{fig:f6}b.  Now it is evident that there is atmost one critical separating triangle passing through  $v_\infty$ corresponding  to each enclosure  corner region. Since a rectangular graph has atmost  four enclosure corner regions, there can be atmost 4 critical separating triangles passing through $v_\infty$. This proves the second condition.
 
\begin{figure}[H]
	\centering
	\includegraphics[width=0.7\linewidth]{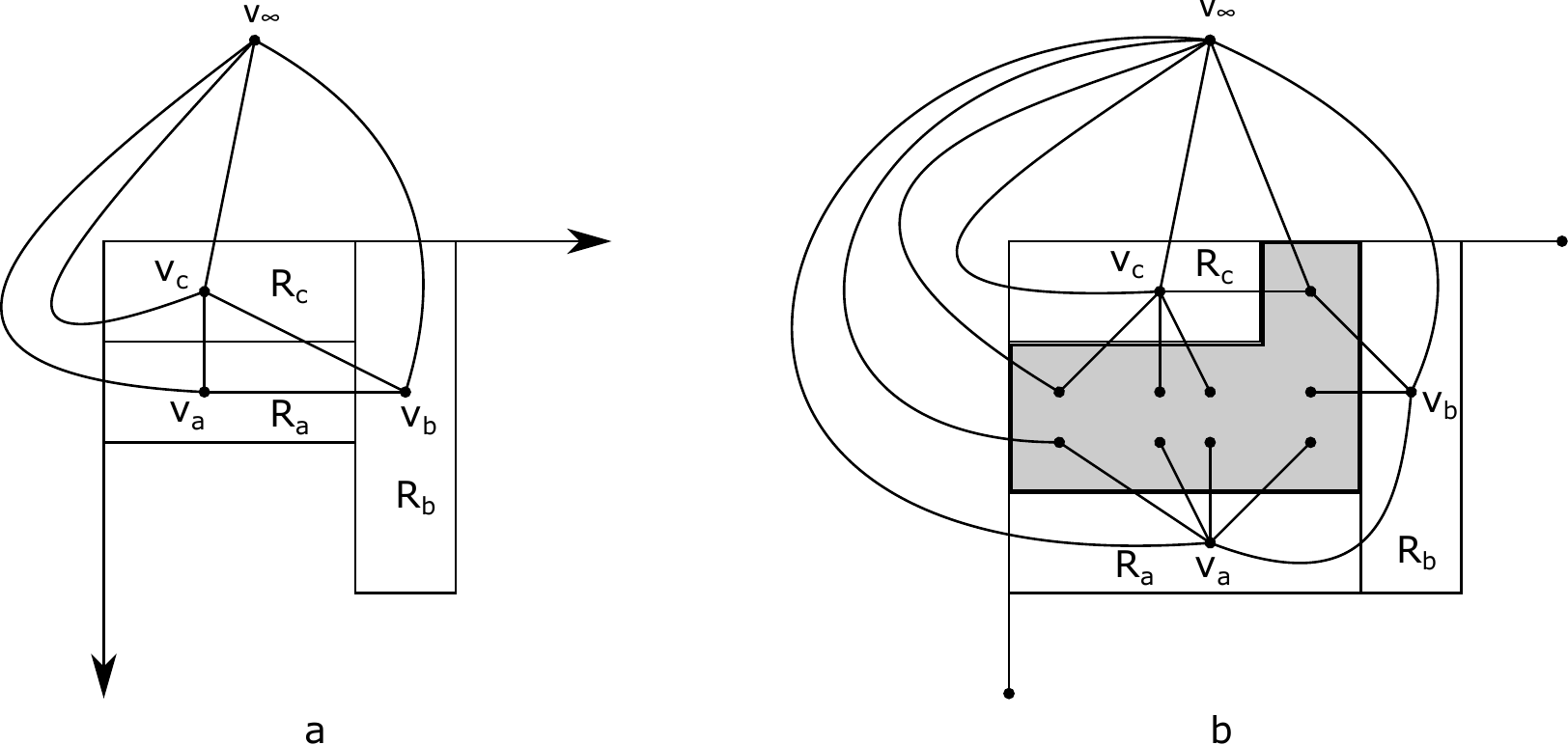}
	\caption{\rm Two possibilities of a critical separating triangle enclosing an enclosure corner vertex.}
	\label{fig:f6}
\end{figure}
\textbf{Sufficient Condition.}  Assume that the given conditions hold. 
We prove the result by applying the induction method on the vertices of $\mathcal{G}^*$. Recall that an EPTG contains atleast two vertices. Let $n$ be the number of vertices of $\mathcal{G}^*$. If $n=2$, then it is a graph consisting of a single edge and hence it is an RDG. Let us assume that $n>2$ and the result holds for $n-1$ vertices, i.e., every $(n-1)$-vertex EPTG satisfying the given conditions is an RDG. In order to complete induction, we need to  prove that $n$-vertex EPTG $\mathcal{H}$ satisfying the given conditions is an RDG. Since there can be atmost four critical separating triangles in $\mathcal{H}$, there arise two possibilities: (1) there are exactly three edges between $v_\infty$ and atleast one of the  enclosure vertices, (2) there are exactly two edges between $v_\infty$ and each  enclosure corner vertex. Let $v_i$ be an enclosure corner vertex of $\mathcal{H}$ and $A =\{ v_1, v_2,\dots, v_t\}$ be the set of vertices adjacent to $v_i$. 
\par
Consider the first case, i.e., there exist edges $(v_i,v_\infty)$, $(v_i,v_p)$, $(v_i,v_q)$ where  vertices $v_p$ and $v_q$ are incident to $v_\infty$ as shown in Fig. \ref{fig:f7}a.  Construct a new EPTG $\mathcal{H}_1$ by deleting  $v_i$ together with the incident edges and introducing  new edges $(v_\infty,v_1)$, $(v_\infty,v_2) \dots (v_\infty,v_t)$  (see Fig. \ref{fig:f7}b). We prove that $\mathcal{H}_1$ satisfies the given conditions stated in the theorem.
\par 
Consider two vertices $v_a$ and $v_b$ of $\mathcal{H}$ such that $i \neq a,b$. As $\mathcal{H}$ is 4-connected, by Menger's theorem, there exist  four vertex-disjoint paths between $v_a$ and $v_b$. Choose each path of the shortest possible length. If none of these paths uses the edges $(v_i,v_p)$ and $(v_i,v_q)$, then the same path would exist in $\mathcal{H}_1$ with the edge $(v_\infty,v_k)$, $(1 \leq k\leq t)$ substituted in the place of $(v_k,v_i)\cup (v_i,v_\infty)$ if they occur in the path. Otherwise suppose that one of the four paths passes through $(v_i,v_p)$. Being the shortest possible path, it can not pass through $v_\infty$ or $v_k$, $(1 \leq k\leq t)$. Consequently, it must use the edge  $(v_i,v_q)$. If a path  passes through $v_\infty$, it would pass through $v_p$ or $v_q$, contradicting to the facts that path  is the shortest. Thus vertex $v_\infty$ is not used by any of the four paths. Now by substituting the part  $(v_i,v_p)\cup (v_i,v_q)$  of the path in $\mathcal{H}$ by $(v_p,v_\infty)\cup (v_\infty,v_q)$ in $\mathcal{H}_1$, we can obtain 4 vertex-disjoint paths in $\mathcal{H}_1$ also. Then by Menger's theorem,  $\mathcal{H}_1$ is 4-connected. 
\par
Next we claim that  the number of critical separating triangles in $\mathcal{H}_1$ can not be more than the number of critical separating triangles in $\mathcal{H}$. As discussed in the necessary part that there is atmost one  critical separating triangle enclosing an enclosure corner vertex and  $\mathcal{H}$ has three enclosure corner vertices, there are atmost three critical separating triangles in $\mathcal{H}$.  Then the only possibility of occurring a separating triangle in $\mathcal{H}_1$ is as follows.  If  an enclosure vertex $v_l$ is incident to both   $v_p$, $v_k$ where  $v_k \in A$, then there exists a separating triangle in $\mathcal{H}_1$ passing through $v_k$, $v_p$ and $v_l$. Similarly, there can be  another separating triangle in $\mathcal{H}_1$ passing through $v_t$, $v_q$ and $v_s\in A$.  thus there can be atmost two new separating triangles in $\mathcal{H}_1$. If there exists a critical separating triangle $T_c$ containing $v_i$ in $\mathcal{H}$, then there are three possibilities: (1) there no longer remains $T_c$ in $\mathcal{H}_1$, (2) $T_c$ is contained in one of the  new created separating triangles in $\mathcal{H}_1$, and (3) One of the new created separating triangle is contained in  $T_c$. All these possibilities show that there can not be more than four critical separating triangles in $\mathcal{H}_1$.  This shows that $\mathcal{H}_1$ has  atmost 4 critical separating triangles.   Thus, $\mathcal{H}_1$ has $n-1$ vertices satisfying the given conditions.  By induction hypothesis, $\mathcal{H}_1$ is  an RDG and hence admits an RFP. This RFP can be transformed to  another RFP by adjoining a region $R_i$ (corresponding to $v_i$) as shown in Fig. \ref{fig:f7}c. Then the resultant  RFP corresponds to $\mathcal{H}$. Hence $\mathcal{H}$ is an RDG.
\par
Consider the second case. In this case, $\mathcal{H}$ appears as shown in Fig. \ref{fig:f8}a with atleast four more vertices $v_1,v_2,v_3$ and $v_4$. Consider the four enclosure corner vertices $v_1$, $v_2$, $v_3$ and $v_4$ as shown in Fig. \ref{fig:f8}a. Now we show that there is a separating cycle $C$ passing through $v_i$, $v_\infty$ and an enclosure vertex $v_d$  but not passing through $v_3$ or $v_4$ such that the removal of vertices of $C$ from  $\mathcal{H}$ disconnects it into two connected pieces, each containing atleast one vertex.
\par
If there is an edge $(v_1,v_3)$ in $\mathcal{H}$, there is a separating cycle passing through $v_1,v_3$ and $v_\infty$. In this case, $\mathcal{H}$ is separated into two parts, one of which contains atleast $v_2$ and another contains atleast $v_4$.
\begin{figure}[H]
\centering
\includegraphics[width=0.9\linewidth]{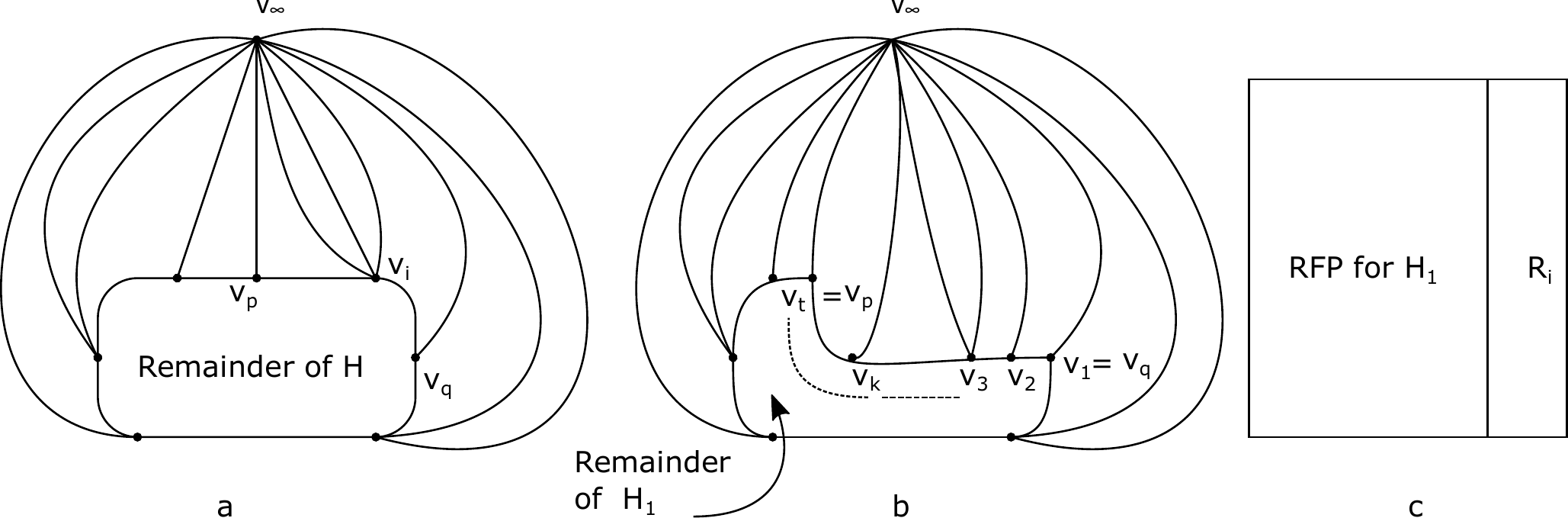}
\caption{\rm (a) Sketch of the graph $\mathcal{H}$ when there are three edges between $v_\infty$ and enclosure vertex $v_i$, (b) sketch of the graph $\mathcal{H}_1$ when  there are exactly two edges between each enclosure corner  vertex  and $v_\infty$, and (c) the construction of an rectangular dual for $\mathcal{H}$.}
\label{fig:f7}
\end{figure}
If there is no edge $(v_1,v_3)$ in $\mathcal{H}$. All  vertices adjacent to $v_3$ lie on a path $y_1y_2\dots y_k$ where $y_1$ and $y_k$ are the enclosure vertices. Let $y_kx_1x_2\dots v_2$ be a path of the enclosure vertices starting from $y_k$ and ending with $v_2$. Then $C=ty_1y_2\dots y_kx_1x_2\dots v_2$ is a separating cycle which separates $\mathcal{H}$ into two parts, one of  which atleast contains $v_1$ and another contains atleast $v_3$.
\begin{figure}[H]
	\centering 
	\includegraphics[width=0.9\linewidth]{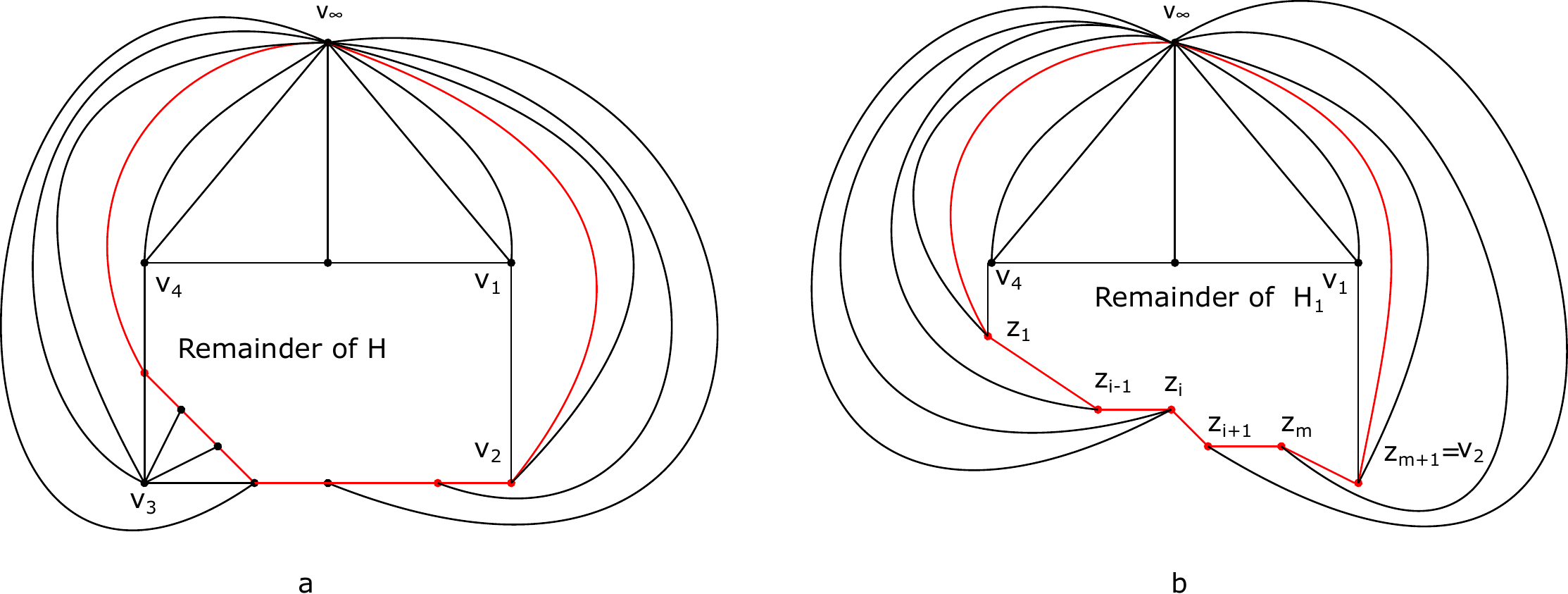}
	\caption{\rm (a) A separating cycle shown by red edges and (b) the appearance of $\mathcal{H}_u$.}
	\label{fig:f8}
\end{figure}
Once a separating cycle exists, there also exists the shortest separating cycle $C_s=v_\infty z_1z_2\dots z_mz_{m+1}$. This situation is depicted in \ref{fig:f8}b. Without loss of generality, suppose $C_s$ separates $v_1$ and $v_3$. Construct an EPTG $\mathcal{H}_u$ from the subgraph contained in the interior of $C$ by adding a vertex $v_\infty$ and edges between $v_\infty$ and enclosure  vertices of this subgraph. The new edges in  this construction are $(v'_\infty, z_1)$, $(v'_\infty, z_2)$, \dots $(v'_\infty, z_{m+1})$. Now we show that $\mathcal{H}_u$ satisfies the given conditions. Only possibility for creating a separating triangle is a triangle $z_iz_{i+1}v_\infty$ for $1 \leq i \leq n$. If there would exist an edge $(z_i,z_{i+1})$ in $\mathcal{H}_u$, then it contradicts that $C_s$ is the shortest separating cycle. Therefore, any cycle in $\mathcal{H}_u$ is of length atleast 4 and consequently, $\mathcal{H}_u$ is 4-connected and can not have more than 4 separating triangles. By induction hypothesis, $\mathcal{H}_u$ is an RDG. Similarly, we can show that the EPTG $\mathcal{H}_b$ constructed from the the remaining part of  $\mathcal{H}$ is  an RDG. Then the corresponding RFP  can be placed one above the other and can be merged after  applying homeomorphic transformation so as to preserve orthogonal directions of the edges such that the resultant floorplan is an RFP of $\mathcal{H}$ as shown in Fig. \ref{fig:f9}. This completes the induction process and hence completes the proof.
\end{proof}

\begin{figure}[H]
	\centering
	\includegraphics[width=0.7\linewidth]{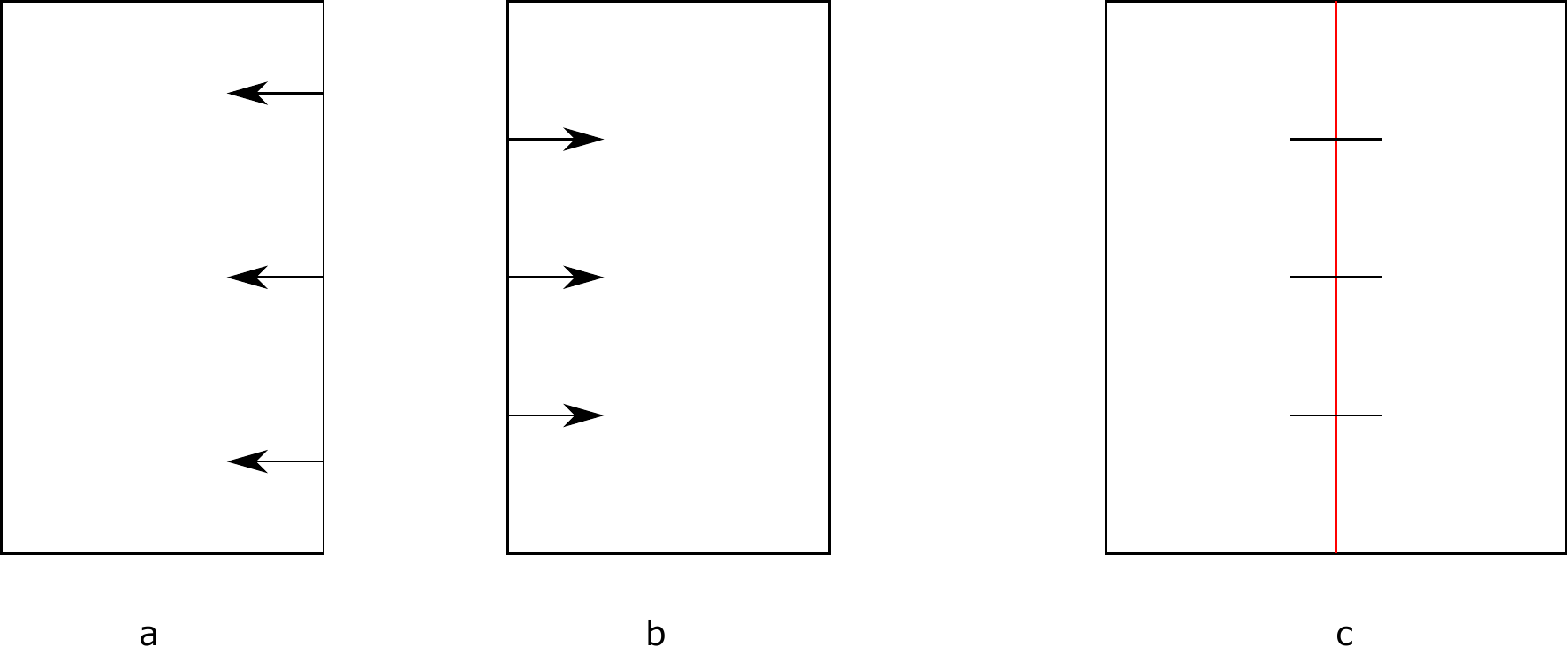}
	\caption{\rm (a) Merging two  RDGs of $\mathcal{H}_u$ and $\mathcal{H}_b$ into  an RDG for $\mathcal{H}$}
	\label{fig:f9}
\end{figure}

Now we turn our attention to derive a necessary and sufficient condition for a PTG to be an RDG. A plane graph can be either nonseparable graph (block) or a separable connected graph. A disconnected graph is also a separable graph. However, we are not considering this case since RFP are concerned with connectivity.

\begin{theorem}\label{thm52}
	{\rm A necessary  and sufficient condition for a nonseparable PTG $\mathcal{G}$ to  an RDG is that it is 4-connected and  has atmost 4 critical shortcuts.}	
\end{theorem}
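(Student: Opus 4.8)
The plan is to reduce the statement to Theorem~\ref{thm51} by passing from the block $\mathcal{G}$ to its extended graph $\mathcal{G}^*$ (the EPTG obtained by adjoining $v_\infty$ to the exterior vertices, together with the construction edges), and then translating the two conditions of that theorem into conditions on $\mathcal{G}$. Concretely, I would run the equivalence chain: $\mathcal{G}$ is an RDG $\iff$ $\mathcal{G}^*$ is an RDG $\iff$ (by Theorem~\ref{thm51}) $\mathcal{G}^*$ is $4$-connected and has at most $4$ critical separating triangles through $v_\infty$ $\iff$ $\mathcal{G}$ is $4$-connected and has at most $4$ critical shortcuts. Since both directions of the desired ``necessary and sufficient'' fall out of this chain, the work is entirely in justifying the three links.

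For the first link, $\mathcal{G}$ is an RDG iff $\mathcal{G}^*$ is, I would appeal to Definition~\ref{def21} together with Sections~\ref{sec3} and~\ref{sec4}: a rectangular dual of $\mathcal{G}$ is a partition of a rectangle $\mathcal{R}$, and the enclosing rectangle with its four corners is exactly the data encoded by $v_\infty$ and the (at most four) construction edges of $\mathcal{G}^*$. Thus an RFP of $\mathcal{G}$ augmented by the unbounded region $R_\infty$ is a rectangular dual of $\mathcal{G}^*$, while deleting $R_\infty$ from any RFP of $\mathcal{G}^*$ returns an RFP of $\mathcal{G}$; hence the two graphs are RDGs simultaneously.

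The heart of the proof is the identification of the two ``critical'' counts. I would set up a bijection between the critical shortcuts of $\mathcal{G}$ and the critical separating triangles of $\mathcal{G}^*$ through $v_\infty$. Since in $\mathcal{G}^*$ the vertex $v_\infty$ is adjacent precisely to the exterior vertices of $\mathcal{G}$, every separating triangle through $v_\infty$ has the form $v_\infty v_a v_b$ with $v_a,v_b$ on the outer cycle $C$ and the edge $(v_a,v_b)$ not lying on $C$, i.e. a shortcut in the sense of Definition~\ref{def24}. I would then show that $v_\infty v_a v_b$ is separating exactly when the shortcut cuts off a nonempty arc of $C$, and that (invoking Definition~\ref{def26}) it is \emph{critical}, meaning it encloses no separating triangle, exactly when the $v_a$--$v_b$ path on $C$ it bounds contains no endpoint of any other shortcut --- which is precisely the condition that $(v_a,v_b)$ be a critical shortcut. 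This matches the counts, so ``at most $4$ critical separating triangles through $v_\infty$'' becomes ``at most $4$ critical shortcuts.''

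For the $4$-connectivity transfer I would use Theorem~\ref{thm23}: passing from $\mathcal{G}$ to $\mathcal{G}^*$ only adds $v_\infty$ joined to the exterior vertices (plus construction edges), and a family of four vertex-disjoint paths between two original vertices survives in both directions, since a path routed through $v_\infty$ can be rerouted along $C$ and conversely. The \textbf{main obstacle} I expect is making the bijection of the previous paragraph fully rigorous: one must fix which arc of $C$ is the interior of $v_\infty v_a v_b$, verify that the construction edges manufacture no critical separating triangles beyond the four corners they encode, and check that criticality transfers in both directions. The $4$-connectivity equivalence is the secondary delicate point, because $v_\infty$ a priori supplies connectivity absent from $\mathcal{G}$; here I would lean on the planarity and the interior-triangulated structure of $\mathcal{G}^*$ to control the rerouting.
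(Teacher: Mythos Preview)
Your proposal is essentially the paper's approach: both reduce to Theorem~\ref{thm51} via the EPTG $\mathcal{G}^*$ and translate its two conditions back to $\mathcal{G}$, with critical separating triangles through $v_\infty$ matching critical shortcuts. Two small differences are worth noting. First, in the necessary direction the paper does not transfer $4$-connectivity from $\mathcal{G}^*$; it argues directly that every interior region of the RFP is surrounded by at least four regions, so $d(v_i)\ge 4$. Second, in the sufficient direction the paper does not leave the construction of $\mathcal{G}^*$ implicit: it explicitly places one enclosure corner vertex in the interior of each CIP (i.e.\ on the arc of $C$ bounded by each critical shortcut, but not at its endpoints), choosing the remaining corners arbitrarily if there are fewer than four critical shortcuts, and then asserts that this particular $\mathcal{G}^*$ satisfies the hypotheses of Theorem~\ref{thm51}. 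This corner placement is precisely the point you flagged as your ``main obstacle'' (that the construction edges must not manufacture extra critical separating triangles), so you have identified the one genuinely delicate step; the paper resolves it by this explicit choice rather than by a general argument.
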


\begin{proof}
\textbf{Necessary Condition.} Assume that $\mathcal{G}$ is an  RDG. Then it admits an RFP  $\mathcal{F}$.  Let $v_i$ be an interior vertex of $\mathcal{G}$ dual to a  rectangular region $R_i$ of $\mathcal{F}$. Recall that there require atleast 4 component rectangular regions to  surround a  rectangular region in an RFP. Therefore,  there exist atleast 4 rectangular regions in $\mathcal{F}$  enclosing $R_i$. Then  $v_i$ is adjacent to atleast 4 vertices, i.e.,   $d(v_i) \geq 4$. Since $v_i$ is an arbitrary interior vertex of $\mathcal{G}$, $\mathcal{G}$ is 4-connected. 
\par	
To the contrary, if there exist 5 critical shortcuts in $\mathcal{G}$, the corresponding EPTG $\mathcal{G}^*$ would contain 5 critical separating triangles, each passing through exactly one critical shortcut. This is a contradiction to  Theorem \ref{thm51}. This shows that $\mathcal{G}$ can not have   more than 4 critical shortcuts.	
\par
\textbf{Sufficient Condition.} Assume that the given conditions hold. 
Choose 4 enclosure corner vertices, each on the path joining the endpoints of the critical shortcut lying on its outermost cycle but not as the endpoints of these paths. If the number of   critical shortcuts are less than 4, choose the remaining enclosure corner vertices randomly among enclosure vertices.    Join each of  these 4 vertices to  $v_\infty$ by two parallel edges and join each of  the remaining $n-4$ enclosure vertices to $v_\infty$ by a single edge. This  constructs an EPTG $\mathcal{G}^*$ satisfying all the conditions given in Theorem \ref{thm51}. Hence $\mathcal{G}$ is an RDG. This completes the  proof.
\end{proof}

\begin{theorem}\label{thm53}
{\rm A necessary  and sufficient condition for a separable connected PTG $\mathcal{G}$ to be an RDG is that:
		
\begin{enumerate}[i.]
\item  each of its blocks  is 4-connected,
\item BNG is a path,
\item  both endpoints of an exterior edge of each of its blocks are not cut vertices,	
\item each maximal blocks corresponding to the endpoints of
the BNG contains at most 2 critical shortcuts, not passing through cut vertices, 
\item  Other remaining  maximal blocks do not contain a critical shortcut,  not passing through a cut vertex.
\end{enumerate} }	
\end{theorem}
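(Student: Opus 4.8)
The plan is to prove both directions by reducing to the biconnected result Theorem \ref{thm52}, applied block by block, and to use throughout the correspondence noted already in Section \ref{sec1}: a cut vertex of $\mathcal{G}$ is realized in any RFP of $\mathcal{G}$ by a through rectangle. This single fact is what organizes every one of the five conditions.

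For necessity, suppose $\mathcal{G}$ is an RDG with RFP $\mathcal{F}$. First I would observe that each block $B_j$, together with the through rectangles of its incident cut vertices, fills a sub-rectangle of the enclosure $\mathcal{R}$ and so inherits an RFP from $\mathcal{F}$; hence each block is itself an RDG, and Theorem \ref{thm52} forces it to be $4$-connected, giving (i). Next, since each through rectangle separates $\mathcal{R}$ into exactly two parts, no two distinct through rectangles can cross, so they are mutually parallel and partition $\mathcal{R}$ into a linear sequence of strips, one block per strip; this is exactly the assertion that the BNG is a path (ii), and it simultaneously shows that each cut vertex lies in exactly two blocks. Condition (iii) I would extract from the mechanism of the counter-example in Fig. \ref{fig:f3}: if an exterior edge of a block joined two cut vertices, their two through rectangles would be forced adjacent along an exterior segment, which is incompatible with both being through rectangles inside a rectangular enclosure. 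Finally, (iv) and (v) follow from counting corners: each of the four corners of $\mathcal{R}$ is covered by a corner or end rectangle, and in the strip decomposition only the two extreme strips (the end blocks of the BNG path) are incident to corners of $\mathcal{R}$, at most two apiece, whereas every interior strip meets only a pair of opposite sides of $\mathcal{R}$. Passing to the EPTG of each block and using that a critical shortcut not through a cut vertex corresponds, via Theorems \ref{thm51} and \ref{thm52}, to a critical separating triangle demanding an enclosure corner, an interior block can afford no such shortcut and an end block at most two.

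For sufficiency I would reverse the construction. Using (ii), order the blocks $B_1,\dots,B_k$ along the BNG path; by (i), (iv) and (v) each $B_j$ is $4$-connected with at most four critical shortcuts, so Theorem \ref{thm52} produces an RFP $\mathcal{F}_j$ of $B_j$. The freedom in choosing the four construction edges lets me position the corners of each block's enclosure: the two end blocks absorb their (at most two) critical shortcuts into genuine enclosure corners, while each cut vertex is set up as a boundary rectangle that can be stretched into a through rectangle. I then glue $\mathcal{F}_1,\dots,\mathcal{F}_k$ in sequence, identifying at each shared cut vertex the two boundary rectangles into a single through rectangle separating the two sub-floorplans, after a homeomorphic transformation that preserves the horizontal and vertical orientation of edges, exactly the merging operation used in the proof of Theorem \ref{thm51}. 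Condition (iii) is what guarantees this closes up: since no exterior edge joins two cut vertices, consecutive through rectangles never collide along the boundary, and the strips assemble into a single rectangular enclosure whose dual is $\mathcal{G}$.

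I expect the gluing step in the sufficiency direction to be the main obstacle. The delicate point is to realize each block with all its cut vertices simultaneously in through position while its at-most-two critical shortcuts still claim corners, and to match the heights of consecutive strips by a homeomorphism without destroying orthogonality or creating a point where four rectangles meet, the very configuration that the path structure (ii) and condition (iii) are meant to forbid. Showing that these local realizations can be chosen consistently along the \emph{whole} path, rather than merely one block at a time, is where the argument will need the most care.
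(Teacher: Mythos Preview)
Your approach is sound but follows a genuinely different route from the paper. The paper does not decompose into blocks and glue; instead, both directions are reduced to Theorem~\ref{thm51} applied to the single EPTG $\mathcal{G}^*$. For necessity, conditions (ii)--(v) are each obtained by contradiction: if one fails, the paper exhibits more than four critical separating triangles through $v_\infty$ in $\mathcal{G}^*$ (three blocks at a common cut vertex, or two cut vertices joined by an exterior edge, each force extra such triangles), contradicting Theorem~\ref{thm51}. For sufficiency the paper argues in two sentences that conditions (i)--(v) make $\mathcal{G}^*$ $4$-connected with at most four critical separating triangles through $v_\infty$, and then invokes Theorem~\ref{thm51} directly. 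No block-by-block RFP construction and no explicit gluing appear, because the inductive merging is already packaged inside the proof of Theorem~\ref{thm51}.

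Your through-rectangle argument via Theorem~\ref{thm52} is more geometric and explains \emph{why} the conditions arise (parallel through rectangles, strip decomposition, corner counting), whereas the paper's triangle count is quicker but less illuminating. The price you pay is precisely the obstacle you flag: realizing every cut vertex of a block in through position while its critical shortcuts still claim corners, and then matching strips consistently along the entire BNG path. The paper sidesteps this entirely by outsourcing the merging to Theorem~\ref{thm51}; you are, in effect, reproving the separable instance of that theorem by hand. If you keep your route, the gluing can be shortened by noting that your strip decomposition coincides with the separating-cycle split in the proof of Theorem~\ref{thm51}, so the homeomorphic merge carried out there (Fig.~\ref{fig:f9}) applies verbatim rather than needing a fresh argument.
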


\begin{proof}
\textbf{Necessary Condition.} Assume that $\mathcal{G}$ is an RDG. 	
The proof of the first  condition is a  direct consequence  followed by  Theorem \ref{thm51}. The BNG of $\mathcal{G}$ has the following possibilities:
  \begin{enumerate}[i.]
  	\item it can be path,
  	\item it can be a cycle of length $\geq 3$,
  	\item it can be a tree.	
  \end{enumerate}
To the contrary, suppose that the BNG is a cycle of length atleast $3$. This implies that atleast three blocks share some cut vertex $v_c$ of $\mathcal{G}$. The construction of an EPTG  $\mathcal{G}^*$ create more than 4 critical separating triangles, each  passing through $v_c$, $v_\infty$, and a vertex adjacent to $v_c$ that belongs to  the outermost cycle of each block. This situation can be depicted in Fig. \ref{fig:f10}a. Then by Theorem \ref{thm51}, $\mathcal{G}$ no longer is an RDG.  A similar argument can be applied when it is a tree. This situation can be depicted in Fig. \ref{fig:f11}a. Thus, the BNG is left with one possibility, i.e., the BNG is a path.
\par
To the contrary, suppose that both the endpoints of an exterior edge $(v_i,v_j)$ of  a block are cut vertices, then there are more than 4  critical  separating triangles  passing through $v_i$, $v_j$  and $v_\infty$  in $\mathcal{G}^*$, which is a contradiction to  Theorem \ref{thm51}. Hence both the endpoints of an exterior edge  of  a block can not be cut vertices simultaneously.
\par
Let $M_i$ be a maximal block corresponding to the endpoints of
the BNG. Since $\mathcal{G}$ is an RDG, each of its block is an RDG. Suppose that $M_i$ is an RDG. Then it admits an RFP $\mathcal{F}_i$. It can be easily noted that out of 4 corner rectangular regions of $\mathcal{F}_i$, only two can be the corner rectangular regions of $\mathcal{F}$. Then there can be atmost two critical separating triangles in $\mathcal{G}^*$ and hence there can be atmost two critical shortcuts in each $M_i$. This implies that the second condition holds. Also, any other maximal block  of the BNG can not share critical separating triangles since any corner rectangular region in $\mathcal{F}$ is an RFP. This implies that no other maximal block has  a critical separating triangle in $\mathcal{G}^*$ and hence there is no  critical shortcut in the remaining maximal blocks.  
\begin{figure}[H]
	\centering
	\includegraphics[width=0.7\linewidth]{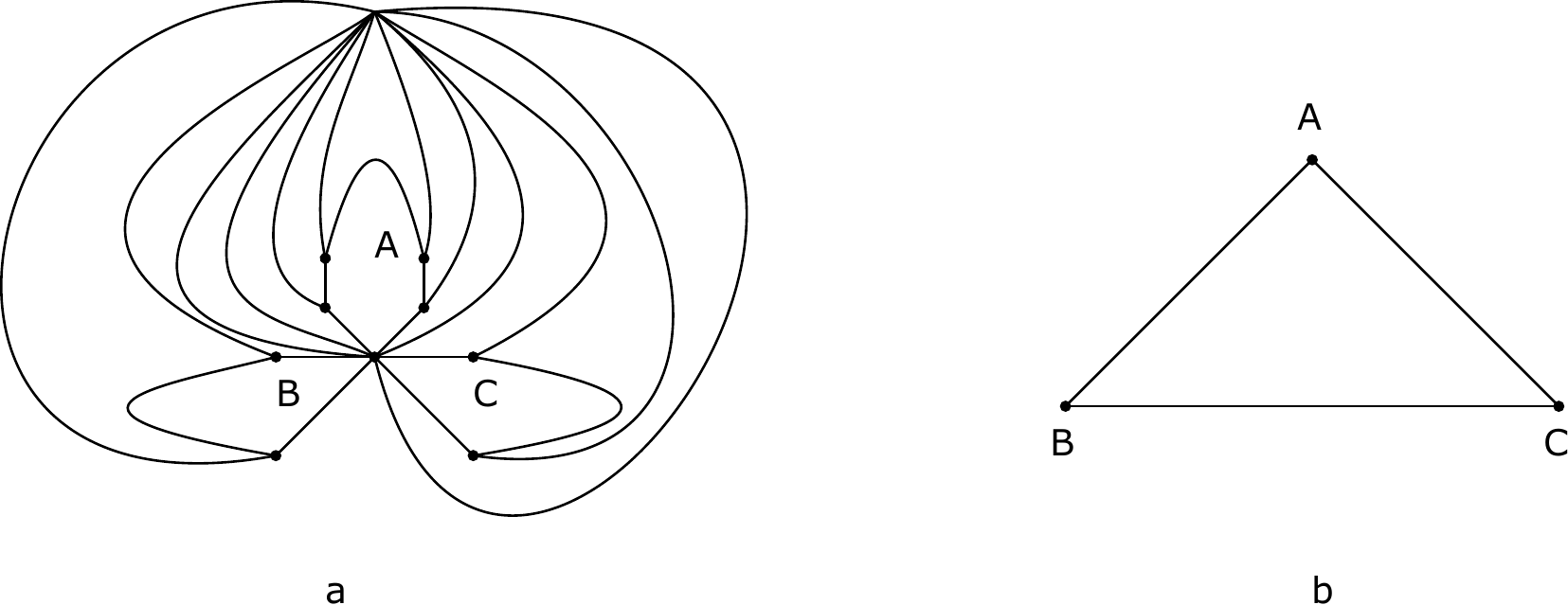}
	\caption{\rm (a) A separable connected graph constituted by three blocks A, B and C, and (b) its BNG. Here only the outermost cycles of the blocks  are shown. }
	\label{fig:f10}
\end{figure} 
\textbf{Sufficient Condition.} Assume that the given conditions hold. The first condition shows that $\mathcal{G}^*$ is 4-connected.
The remaining conditions show that there are atmost four critical separating triangles in $\mathcal{G}^*$. By Theorem \ref{thm51}, $\mathcal{G}$ is an RDG. Hence the proof.
\end{proof}

\begin{figure}[H]
	\centering
	\includegraphics[width=0.7\linewidth]{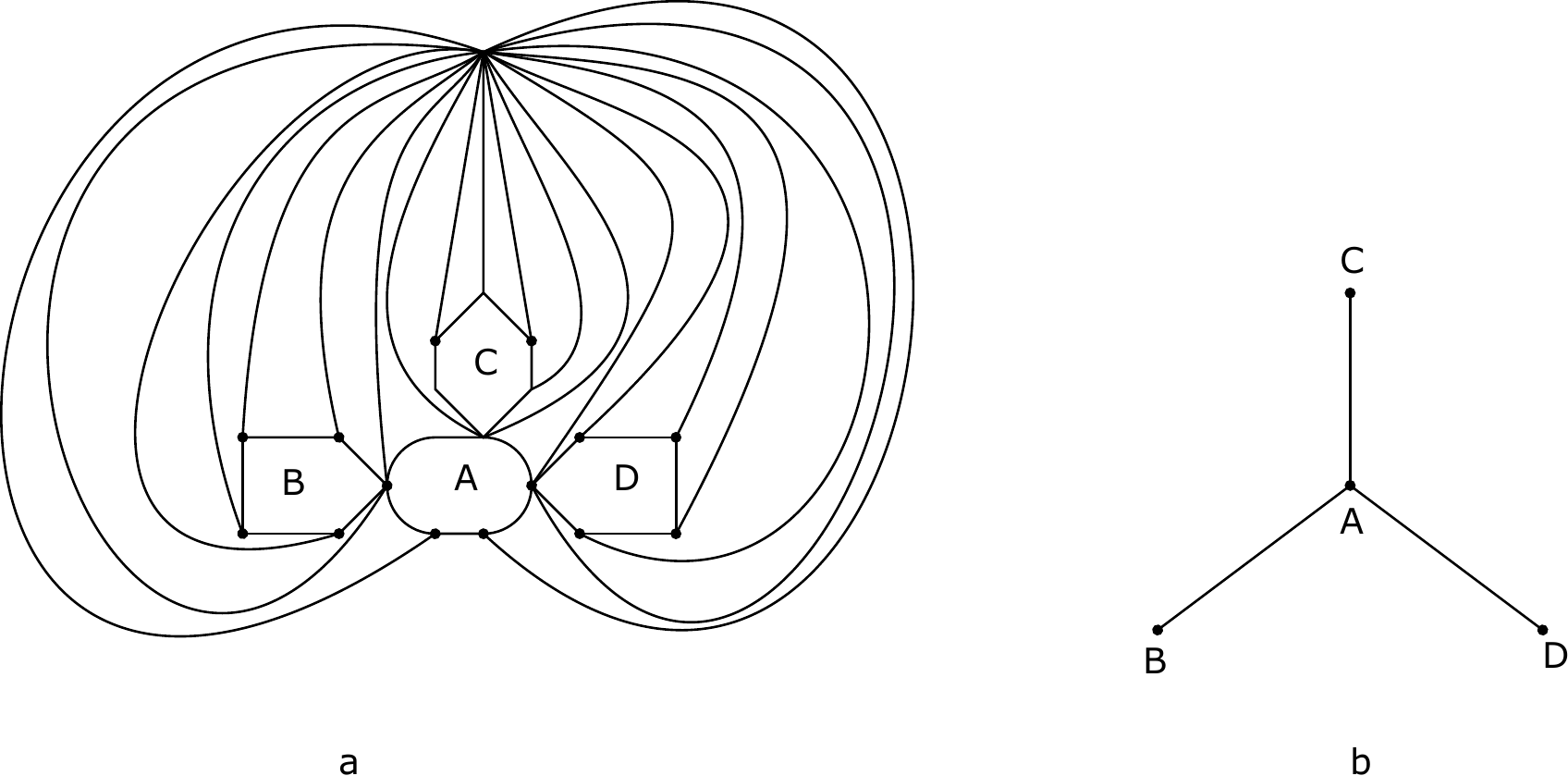}
	\caption{\rm  A separable connected graph constituted by three blocks A, B, C and D, and (b) its BNG. Here only the outermost cycles of the blocks  are shown.}
	\label{fig:f11}
\end{figure}

\section{Concluding remarks and future task} \label{sec6}
 We developed graph theoretic characterization of RFPs.  We reported that the existing RDG theory may fail in some cases. 
 Hence, we proposed a new RDG theory which  is  easily implementable and it simplifies the floorplan construction process of the VLSI circuits as well architectural buildings. 
\par
In future, it would be interesting to transform a nonRDG into  an RDG by removing those edges  which violates the RDG property and then adding new edges (maintaining RDG property) in such a way that the distances of endpoints of  the deleted edges can be minimized. This idea  would be useful  in reducing the interconnection wire-lengths as well as in complex buildings,  it gives  the shortest possible paths for those pairs of rooms which is impossible to directly connect.

 \bibliographystyle{elsarticle-num} 
\bibliography{references}

\begin{thebibliography}{10}
\expandafter\ifx\csname url\endcsname\relax
  \def\url#1{\texttt{#1}}\fi
\expandafter\ifx\csname urlprefix\endcsname\relax\def\urlprefix{URL }\fi
\expandafter\ifx\csname href\endcsname\relax
  \def\href#1#2{#2} \def\path#1{#1}\fi

\bibitem{Kozminski85}
K.~Ko{\'z}mi{\'n}ski, E.~Kinnen, Rectangular duals of planar graphs, Networks
  15~(2) (1985) 145--157.

\bibitem{Heller82}
K.~Maling, W.~Heller, S.~Mueller, On finding most optimal rectangular package
  plans, in: 19th Design Automation Conference, IEEE, 1982, pp. 663--670.

\bibitem{Shekhawat18}
K.~Shekhawat, Enumerating generic rectangular floor plans, Automation in
  Construction 92 (2018) 151--165.

\bibitem{Bhasker87}
J.~Bhasker, S.~Sahni, A linear time algorithm to check for the existence of a
  rectangular dual of a planar triangulated graph, Networks 17~(3) (1987)
  307--317.

\bibitem{Rinsma87}
I.~Rinsma, Nonexistence of a certain rectangular floorplan with specified areas
  and adjacency, Environment and Planning B: Planning and Design 14~(2) (1987)
  163--166.

\bibitem{Lai90}
Y.-T. Lai, S.~M. Leinwand, A theory of rectangular dual graphs, Algorithmica
  5~(1-4) (1990) 467--483.

\bibitem{kozminski1984algorithm}
K.~Kozminski, E.~Kinnen, An algorithm for finding a rectangular dual of a
  planar graph for use in area planning for vlsi integrated circuits, in: 21st
  Design Automation Conference Proceedings, IEEE, 1984, pp. 655--656.

\bibitem{Bhasker88}
J.~Bhasker, S.~Sahni, A linear algorithm to find a rectangular dual of a planar
  triangulated graph, Algorithmica (1988) 247--278.

\bibitem{Lai88}
Y.-T. Lai, S.~M. Leinwand, Algorithms for floorplan design via rectangular
  dualization, IEEE transactions on computer-aided design of integrated
  circuits and systems 7~(12) (1988) 1278--1289.

\bibitem{Kozminski88}
K.~A. Kozminski, E.~Kinnen, Rectangular dualization and rectangular
  dissections, IEEE Transactions on Circuits and Systems 35~(11) (1988)
  1401--1416.

\bibitem{Tang90}
H.~Tang, W.-K. Chen, Generation of rectangular duals of a planar triangulated
  graph by elementary transformations, in: IEEE International Symposium on
  Circuits and Systems, IEEE, 1990, pp. 2857--2860.

\bibitem{He93}
X.~He, On finding the rectangular duals of planar triangular graphs, SIAM
  Journal on Computing 22~(6) (1993) 1218--1226.

\bibitem{Yeap95}
G.~K. Yeap, M.~Sarrafzadeh, Sliceable floorplanning by graph dualization, SIAM
  Journal on Discrete Mathematics 8~(2) (1995) 258--280.

\bibitem{Kant97}
G.~Kant, X.~He, Regular edge labeling of 4-connected plane graphs and its
  applications in graph drawing problems, Theoretical Computer Science
  172~(1-2) (1997) 175--193.

\bibitem{Dasgupta98}
P.~S. Dasgupta, S.~Sur-Kolay, B.~B. Bhattacharya, A unified approach to
  topology generation and optimal sizing of floorplans, IEEE transactions on
  computer-aided design of integrated circuits and systems 17~(2) (1998)
  126--135.

\bibitem{Dasgupta01}
P.~Dasgupta, S.~Sur-Kolay, Slicible rectangular graphs and their optimal
  floorplans, ACM Transactions on Design Automation of Electronic Systems 6~(4)
  (2001) 447--470.

\bibitem{Eppstein12}
D.~Eppstein, E.~Mumford, B.~Speckmann, K.~Verbeek, Area-universal and
  constrained rectangular layouts, SIAM Journal on Computing 41~(3) (2012)
  537--564.

\bibitem{Nakano01}
S.-i. Nakano, Enumerating floorplans with n rooms, in: International Symposium
  on Algorithms and Computation, Springer, 2001, pp. 107--115.

\bibitem{Shen03}
Z.~C. Shen, C.~C. Chu, Bounds on the number of slicing, mosaic, and general
  floorplans, IEEE Transactions on Computer-Aided Design of Integrated Circuits
  and Systems 22~(10) (2003) 1354--1361.

\bibitem{Yao03}
B.~Yao, H.~Chen, C.-K. Cheng, R.~Graham, Floorplan representations: Complexity
  and connections, ACM Transactions on Design Automation of Electronic Systems
  (TODAES) 8~(1) (2003) 55--80.

\bibitem{Ackerman06}
E.~Ackerman, G.~Barequet, R.~Y. Pinter, A bijection between permutations and
  floorplans, and its applications, Discrete Applied Mathematics 154~(12)
  (2006) 1674--1684.

\bibitem{Reading12}
N.~Reading, Generic rectangulations, European Journal of Combinatorics 33~(4)
  (2012) 610--623.

\bibitem{Dawei14}
B.~D. He, A simple optimal binary representation of mosaic floorplans and
  baxter permutations, Theoretical Computer Science 532 (2014) 40--50.

\bibitem{Yamanaka17}
K.~Yamanaka, M.~S. Rahman, S.-I. Nakano, Floorplans with columns, in:
  International Conference on Combinatorial Optimization and Applications,
  Springer, 2017, pp. 33--40.

\bibitem{Yeap93}
K.-H. Yeap, M.~Sarrafzadeh, Floor-planning by graph dualization: 2-concave
  rectilinear modules, SIAM Journal on Computing 22~(3) (1993) 500--526.

\bibitem{He99}
X.~He, On floor-plan of plane graphs, SIAM Journal on Computing 28~(6) (1999)
  2150--2167.

\bibitem{Chiang05}
Y.-T. Chiang, C.-C. Lin, H.-I. Lu, Orderly spanning trees with applications,
  SIAM Journal on Computing 34~(4) (2005) 924--945.

\bibitem{Zhang11}
H.~Zhang, S.~Sadasivam, Improved floor-planning of graphs via
  adjacency-preserving transformations, Journal of combinatorial optimization
  22~(4) (2011) 726--746.

\bibitem{Alam13}
M.~J. Alam, T.~Biedl, S.~Felsner, M.~Kaufmann, S.~G. Kobourov, T.~Ueckerdt,
  Computing cartograms with optimal complexity, Discrete \& Computational
  Geometry 50~(3) (2013) 784--810.

\bibitem{rinsma1988existence}
I.~Rinsma, Existence theorems for floorplans, Bulletin of the Australian
  Mathematical Society 37~(3) (1988) 473--475.

\bibitem{west1996introduction}
D.~B. West, et~al., Introduction to graph theory, Vol.~2, Prentice hall Upper
  Saddle River, NJ, 1996.

\end{thebibliography}

\end{document}